\numberwithin{equation}{section}
\newtheorem{Th}{Theorem}[section]
\newtheorem{Cor}[Th]{Corollary}
\newtheorem{Lemma}[Th]{Lemma}
\theoremstyle{definition}
\newtheorem{Example}[Th]{Example}
\newtheorem{df}[Th]{Definition}
\theoremstyle{remark}
\newcommand{\R}{\mathbb{R}}
\newcommand{\N}{\mathbb{N}}
\begin{document}

\title{The diamond-alpha Riemann integral
and mean value theorems on time scales}

\author{Agnieszka B. Malinowska${}^{\dag}$\\
        \texttt{abmalina@pb.bialystok.pl}
        \and
        Delfim F. M. Torres${}^{\ddag}$\\
        \texttt{delfim@ua.pt}}

\date{${}^{\dag}$Faculty of Computer Science\\
      Bia{\l}ystok Technical University\\
      15-351 Bia\l ystok, Poland\\[0.3cm]
      ${}^{\ddag}$Department of Mathematics\\
      University of Aveiro\\
      3810-193 Aveiro, Portugal}

\maketitle


\begin{abstract}
We study diamond-alpha integrals on time scales.
A diamond-alpha version of Fermat's theorem for stationary points
is also proved, as well as Rolle's, Lagrange's, and
Cauchy's mean value theorems on time scales.
\end{abstract}

\smallskip

\noindent \textbf{Mathematics Subject Classification 2000:}
26A42; 39A12.

\smallskip


\smallskip

\noindent \textbf{Keywords:} time scales, diamond-alpha integral,
Fermat's theorem for stationary points, mean value theorems.


\section{Introduction}

The calculus on time scales has been initiated
by Aulbach and Hilger in order to create a theory that
can unify and extend discrete and continuous analysis \cite{Aulbach-Hilger}. Two versions of the calculus on time scales, the delta and nabla calculus, are now standard in the theory of time scales \cite{livro,livro2}.
In 2006, a combined diamond-alpha dynamic derivative was introduced by Sheng, Fadag, Henderson, and Davis \cite{sfhd},
as a linear combination of the delta and nabla dynamic
derivatives on time scales.
The diamond-alpha derivative reduces
to the standard delta-derivative for $\alpha =1$ and to the standard nabla derivative for $\alpha =0$. On the other hand, it
represents a weighted dynamic derivative formula on any uniformly
discrete time scale when $\alpha =\frac{1}{2}$.
We refer the reader to
\cite{Controlo2008,RS,Sheng,sfhd,AFT} for a complete account
of the recent diamond-alpha calculus on time scales.
In Section~\ref{sec:prl} we briefly review the
necessary definitions and calculus on time scales;
our results are given in Section~\ref{sec:MR}.

The diamond-alpha integral on time scales is defined in
\cite{RS,Sheng,sfhd,AFT} by means of a
linear combination of the delta and nabla integrals.
In the present paper we use a Darboux approach to define the
Riemann diamond-alpha integral on time scales
and to prove the corresponding main theorems of the diamond-alpha integral calculus (Section~\ref{subsec:RI}). In addition,
we briefly investigate diamond-alpha improper integrals (Section~\ref{subsec:II}), and prove some new versions
of mean value theorems on time scales
via diamond-$\alpha$ derivatives and integrals
(Section~\ref{subsec:MVT}). A new notion of local extremum on time scales is also proposed, which leads to a
diamond-alpha Fermat's theorem for stationary points
(Theorem~\ref{thm:ft}) more similar in aspect to the classical condition than those of delta or nabla derivatives.


\section{Preliminaries on time scales}
\label{sec:prl}

In this section we introduce basic definitions and results from the theory of delta, nabla, and diamond-alpha time scales \cite{livro,RS,sfhd}.

A nonempty closed subset of $\mathbb{R}$ is called a \emph{time
scale} and is denoted by $\mathbb{T}$.
The \emph{forward jump operator}
$\sigma:\mathbb{T}\rightarrow\mathbb{T}$ is defined by
$$\sigma(t)=\inf{\{s\in\mathbb{T}:s>t}\},
\mbox{ for all $t\in\mathbb{T}$},$$
while the \emph{backward jump operator}
$\rho:\mathbb{T}\rightarrow\mathbb{T}$ is defined by
$$\rho(t)=\sup{\{s\in\mathbb{T}:s<t}\},\mbox{ for all
$t\in\mathbb{T}$},$$ with $\inf\emptyset=\sup\mathbb{T}$
(\textrm{i.e.}, $\sigma(M)=M$ if $\mathbb{T}$ has a maximum $M$),
and $\sup\emptyset=\inf\mathbb{T}$ (\textrm{i.e.}, $\rho(m)=m$ if
$\mathbb{T}$ has a minimum $m$).

A point $t\in\mathbb{T}$ is called \emph{right-dense},
\emph{right-scattered}, \emph{left-dense} and
\emph{left-scattered} if $\sigma(t)=t$, $\sigma(t)>t$, $\rho(t)=t$
and $\rho(t)<t$, respectively.

Throughout the paper we let $\mathbb{T}=[a,b]\cap\mathbb{T}_{0}$
with $a<b$ and $\mathbb{T}_0$ a time scale
containing $a$ and $b$.

The \emph{delta graininess function}
$\mu:\mathbb{T}\rightarrow[0,\infty)$ is defined by
$$\mu(t)=\sigma(t)-t,\mbox{ for all $t\in\mathbb{T}$} \, ;$$
the \emph{nabla graininess function}
is defined by $\eta(t):=t-\rho(t)$.

We introduce the sets $\mathbb{T}^{k}$, $\mathbb{T}_{k}$, and
$\mathbb{T}^{k}_{k}$, which are derived from the time scale
$\mathbb{T}$, as follows. If $\mathbb{T}$ has a left-scattered
maximum $t_{1}$, then $\mathbb{T}^{k}= \mathbb{T}-\{t_{1} \}$,
otherwise $\mathbb{T}^{k}= \mathbb{T}$. If $\mathbb{T}$ has a
right-scattered minimum $t_{2}$, then $\mathbb{T}_{k}=
\mathbb{T}-\{t_{2} \}$, otherwise $\mathbb{T}_{k}= \mathbb{T}$.
Finally, we define $\mathbb{T}_{k}^{k}= \mathbb{T}^{k} \cap
\mathbb{T}_{k}$.

We say that a function $f:\mathbb{T}\rightarrow\mathbb{R}$ is
\emph{delta differentiable} at $t\in\mathbb{T}^k$ if there exists
a number $f^{\Delta}(t)$ such that for all $\varepsilon>0$ there
is a neighborhood $U$ of $t$ (\textrm{i.e.},
$U=(t-\delta,t+\delta)\cap\mathbb{T}$ for some $\delta>0$) such
that
$$|f(\sigma(t))-f(s)-f^{\Delta}(t)(\sigma(t)-s)|
\leq\varepsilon|\sigma(t)-s|,\mbox{ for all $s\in U$}.$$ We call
$f^{\Delta}(t)$ the \emph{delta derivative} of $f$ at $t$ and say
that $f$ is \emph{delta differentiable} on $\mathbb{T}^k$ provided
$f^{\Delta}(t)$ exists for all $t\in\mathbb{T}^k$.

We define $f^{\nabla}(t)$
to be the number value, if one exists, such that for all $\epsilon
>0$, there is a neighborhood $V$ of $t$ such that for all $s \in
V$,
$$
\left|f(\rho(t))- f(s)-f^{\nabla}(t)\left(\rho(t)-s\right)\right|
\leq \epsilon |\rho(t)-s|.
$$
We say that $f$ is nabla differentiable on $\mathbb{T}_{k}$,
provided that $f^{\nabla}(t)$ exists for all $t \in
\mathbb{T}_{k}$.

For delta differentiable functions $f$ and $g$, the next formula
holds:
\begin{equation*}
\begin{aligned}
(fg)^\Delta(t)&=f^\Delta(t)g^\sigma(t)+f(t)g^\Delta(t)\\
&=f^\Delta(t)g(t)+f^\sigma(t)g^\Delta(t),
\end{aligned}
\end{equation*}
where we abbreviate here and throughout the text $f\circ\sigma$ by $f^\sigma$. Similarly property holds for nabla derivatives
(and we then use the notation $f^\rho = f\circ\rho$).

A function $f:\mathbb{T}\rightarrow\mathbb{R}$ is said to be a
\emph{regulated} function if its left-sided limits exist at left-dense points, and its right-sided limits exist at right-dense points.

A function $f:\mathbb{T}\rightarrow\mathbb{R}$ is called
\emph{rd-continuous} if it is continuous at right-dense points and if its left-sided limit exists at left-dense points. We denote the set of all rd-continuous functions by C$_{\textrm{rd}}$ and the
set of all delta differentiable functions with rd-continuous derivative
by C$_{\textrm{rd}}^1$.

Analogously, a function $f: \mathbb{T} \rightarrow \mathbb{R}$ is called ld-continuous, provided it is continuous at all left-dense
points in $\mathbb{T}$ and its right-sided limits exist finite at all right-dense points in $\mathbb{T}$.

It is known that rd-continuous functions possess a
\emph{delta antiderivative}, \textrm{i.e.}, there exists a function $F$
with $F^\Delta=f$, and in this case the \emph{delta integral} is
defined by $\int_{c}^{d}f(t)\Delta t=F(c)-F(d)$ for all
$c,d\in\mathbb{T}$. The delta integral has the following property:
\begin{equation*}
\int_t^{\sigma(t)}f(\tau)\Delta\tau=\mu(t)f(t).
\end{equation*}

A function $G: \mathbb{T} \rightarrow \mathbb{R} $ is called a
\emph{nabla antiderivative} of $g: \mathbb{T} \rightarrow \mathbb{R}$,
provided $G^{\nabla}(t)=g(t)$ holds for all $t \in
\mathbb{T}_{k}$. Then, the nabla integral of $g$ is defined by
$\int^b_a g(t)\nabla t=G(b)-G(a)$.

Let $\mathbb{T}$ be a time scale, and $t$, $s \in \mathbb{T}$.
Following \cite{RS}, we define $\mu_{t s} = \sigma(t)-s$, $\eta_{t
s} = \rho(t)-s$, and $f^{\diamond_{\alpha}}(t)$ to be the value, if
one exists, such that for all $\epsilon >0$ there is a neighborhood
$U$ of $t$ such that for all $s \in U$
\begin{equation*}
\left| \alpha \left[f^\sigma(t) - f(s)\right] \eta_{t s} +
(1-\alpha) \left[f^\rho(t) - f(s) \right] \mu_{t s} -
f^{\diamond_{\alpha}}(t) \mu_{t s} \eta_{t s} \right| < \epsilon
\left|\mu_{t s} \eta_{t s}\right| \, .
\end{equation*}
A function $f$ is said diamond-$\alpha$ differentiable on
$\mathbb{T}^k_k$ provided $f^{\diamond_{\alpha}}(t)$ exists for all
$t \in \mathbb{T}^k_k$. Let $0 \leq \alpha \leq 1$. If $f(t)$ is
differentiable on $t \in \mathbb{T}^k_k$ both in the delta and nabla
senses, then $f$ is diamond-$\alpha$ differentiable at $t$ and the
dynamic derivative $f^{\diamond_{\alpha}}(t)$ is given by
\begin{equation}
\label{eq:defSmp} f^{\diamond_{\alpha}}(t)= \alpha
f^{\Delta}(t)+(1-\alpha)f^{\nabla}(t)
\end{equation}
(see \cite[Theorem~3.2]{RS}). Equality \eqref{eq:defSmp} is given as
definition of $f^{\diamond_{\alpha}}(t)$ in \cite{sfhd}. The
diamond-$\alpha$ derivative reduces to the standard $\Delta$
derivative for $\alpha =1$, or the standard $\nabla$ derivative for
$\alpha =0$. On the other hand, it represents a ``weighted dynamic
derivative'' for $\alpha \in (0,1)$. Furthermore, the combined
dynamic derivative offers a centralized derivative formula on any
uniformly discrete time scale $\mathbb{T}$ when
$\alpha=\frac{1}{2}$.

Let $f, g: \mathbb{T} \rightarrow \mathbb{R}$ be diamond-$\alpha$
differentiable at $t \in \mathbb{T}^k_k$. Then (\textrm{cf.}
\cite[Theorem~2.3]{sfhd}),

\begin{itemize}

\item[(i)] $f+g: \mathbb{T} \rightarrow \mathbb{R}$ is
diamond-$\alpha$ differentiable at $t \in \mathbb{T}^k_k$ with
$$ (f+g)^{\diamond_{\alpha}}(t)=
f^{\diamond_{\alpha}}(t)+g^{\diamond_{\alpha}}(t);
$$

\item[(ii)] For any constant $c$, $cf: \mathbb{T} \rightarrow
\mathbb{R}$
 is diamond-$\alpha$ differentiable at $t \in \mathbb{T}^k_k$ with
$$
(cf)^{\diamond_{\alpha}}(t)= cf^{\diamond_{\alpha}}(t);
$$

\item[(iii)] $fg: \mathbb{T} \rightarrow \mathbb{R}$ is
diamond-$\alpha$ differentiable at $t \in \mathbb{T}^k_k$ with

$$
(fg)^{\diamond_{\alpha}}(t)= f^{\diamond_{\alpha}}(t)g(t)+ \alpha
f^{\sigma}(t)g^{\Delta}(t) +(1-\alpha) f^{\rho}(t)g^{\nabla}(t).
$$

\end{itemize}


\section{Main results}
\label{sec:MR}

Let $a, b \in \mathbb{T}$, and $h: \mathbb{T} \rightarrow
\mathbb{R}$. The diamond-$\alpha$ integral of $h$ from $a$
to $b$ is defined in \cite{RS,sfhd} by
\begin{equation}
\label{eq:DAI:JF} \int_{a}^{b}h(\tau) \diamond_{\alpha} \tau =
\alpha \int_{a}^{b}h(\tau) \Delta \tau +(1- \alpha)
\int_{a}^{b}h(\tau) \nabla \tau, \quad 0 \leq \alpha \leq 1,
\end{equation}
provided that there exist delta and nabla integrals of $h$ on
$\mathbb{T}$. In \S\ref{subsec:RI} we introduce a more basic notion of diamond-$\alpha$ integral. We use a Darboux approach without the need to define previously delta and nabla integrals.
Improper integrals are introduced in \S\ref{subsec:II}.
We end with \S\ref{subsec:MVT}, proving some generalizations of the mean value theorems on time scales via diamond-$\alpha$ derivatives and integrals. Moreover, a new notion of local extremum on time scales is proposed, which leads to a
diamond-alpha first order optimality condition more similar to the classical condition ($\mathbb{T}=\mathbb{R}$) than those of delta or nabla derivatives (\textrm{cf.} \cite{livro}).

\subsection{The Riemann diamond-$\alpha$ integral}
\label{subsec:RI}

Let $\mathbb{T}$ be a one-dimensional time scale, $a, b\in
 \mathbb{T}$, $a<b$ and $[a,b]$ a closed, bounded interval in
$\mathbb{T}$. A partition of $[a,b]$ is any finite ordered subset
\begin{equation*}
    P=\{t_{0},t_{1},\ldots,t_{n}\}\subset[a,b]\, ,
\end{equation*}
where $a=t_{0}<t_{1}<\cdots<t_{n}=b$. The number $n$ depends on the
particular partition, so we have $n=n(P)$. We denote by
$\mathcal{P}=\mathcal{P}(a,b)$ the set of all partitions of $[a,b]$.
Let $f$ be a real-valued bounded function on $[a,b]$. We set:
\begin{equation*}
 \overline{M}=\sup\{f(t):t\in[a,b)\}\, , \quad
 \overline{m}=\inf\{f(t):t\in[a,b)\}\, ,
 \end{equation*}
\begin{equation*}
 \underline{M}=\sup\{f(t):t\in(a,b]\}\, , \quad \underline{m}=\inf\{f(t):t\in(a,b]\}\, ,
\end{equation*}
and for $1\leq i \leq n$
\begin{equation*}
 \overline{M_{i}}=\sup\{f(t):t\in[t_{i-1},t_{i})\}\, , \quad
 \overline{m_{i}}=\inf\{f(t):t\in[t_{i-1},t_{i})\}\, ,
 \end{equation*}
\begin{equation*}
 \underline{M_{i}}=\sup\{f(t):t\in(t_{i-1},t_{i}]\}\, , \quad \underline{m_{i}}=\inf\{f(t):t\in(t_{i-1},t_{i}]\}\,
 .
\end{equation*}
Let $\alpha\in[0,1]\subset \R$. The upper Darboux
$\diamond_{\alpha}$-sum $U(f,P)$ and the lower Darboux
$\diamond_{\alpha}$-sum $L(f,P)$ of $f$ with respect to $P$ are
defined respectively by
\begin{equation*}
    U(f,P)=\sum_{i=1}^{n}(\alpha\overline{M_{i}}+(1-\alpha)\underline{M_{i}})(t_{i}-t_{i-1})\,
    ,
\end{equation*}
\begin{equation*}
    L(f,P)=\sum_{i=1}^{n}(\alpha\overline{m_{i}}+(1-\alpha)\underline{m_{i}})(t_{i}-t_{i-1})\,
    .
\end{equation*}
Note that
\begin{equation*}
    U(f,P)\leq
    \sum_{i=1}^{n}(\alpha\overline{M}+(1-\alpha)\underline{M})(t_{i}-t_{i-1})
    =(\alpha\overline{M}+(1-\alpha)\underline{M})(b-a)
\end{equation*}
and
\begin{equation*}
    L(f,P)\geq
    \sum_{i=1}^{n}(\alpha\overline{m}+(1-\alpha)\underline{m})(t_{i}-t_{i-1})
    =(\alpha\overline{m}+(1-\alpha)\underline{m})(b-a) \, .
\end{equation*}
Thus, we have:
\begin{equation}\label{sd}
   (\alpha\overline{m}+(1-\alpha)\underline{m})(b-a)\leq L(f,P)\leq U(f,P)\leq
    (\alpha\overline{M}+(1-\alpha)\underline{M})(b-a).
\end{equation}
The upper Darboux $\diamond_{\alpha}$-integral $U(f)$ of $f$ from
$a$ to $b$ is defined by
\begin{equation*}
    U(f)=\inf\{U(f,P): P\in \mathcal{P}(a,b)\}
\end{equation*}
and the lower Darboux $\diamond_{\alpha}$-integral $L(f)$ of $f$
from $a$ to $b$ is defined by
\begin{equation*}
    L(f)=\sup\{L(f,P): P\in \mathcal{P}(a,b)\}\, .
\end{equation*}
In view of \eqref{sd}, $U(f)$ and $L(f)$ are finite real numbers.

\begin{df}
We say that $f$ is $\diamond_{\alpha}$-integrable from $a$ to $b$
(or on $[a,b]$) provided $L(f)=U(f)$. In this case, we write
$\int_{a}^{b}f(t)\diamond_{\alpha}t$ for this common value. We call
this integral the Darboux $\diamond_{\alpha}$-integral.
\end{df}

Let $\overline{U}(f)$ and $\overline{L}(f)$ denote the upper and the lower Darboux $\Delta$-integral of $f$ from $a$ to $b$, respectively; $\underline{U}(f)$ and $\underline{L}(f)$ denote
the upper and the lower Darboux $\nabla$-integral of $f$ from $a$ to $b$, respectively. Given the
construction of $U(f)$ and $L(f)$, the equality
\eqref{eq:DAI:JF} follows from the properties of supremum and infimum.

\begin{Cor}
If $f$ is $\Delta$-integrable from $a$ to $b$ and
$\nabla$-integrable from $a$ to $b$, then it is $\diamond_{\alpha}$-integrable from $a$ to $b$ and
\begin{equation*}
\int_{a}^{b}f(t)\diamond_{\alpha}t=\alpha\int_{a}^{b}f(t)\Delta
t+(1-\alpha)\int_{a}^{b}f(t)\nabla t.
\end{equation*}
\end{Cor}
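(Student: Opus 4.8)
The plan is to reduce the diamond-alpha statement to the corresponding classical Darboux facts for the delta and nabla integrals, exploiting the additive structure of the upper and lower diamond-alpha sums. First I would observe, directly from the definitions of $U(f,P)$ and $L(f,P)$, that each splits as a convex combination
\begin{equation*}
U(f,P)=\alpha\,\overline{U}(f,P)+(1-\alpha)\,\underline{U}(f,P),\qquad
L(f,P)=\alpha\,\overline{L}(f,P)+(1-\alpha)\,\underline{L}(f,P),
\end{equation*}
where $\overline{U}(f,P),\overline{L}(f,P)$ are the upper and lower Darboux $\Delta$-sums (built from the half-open cells $[t_{i-1},t_i)$) and $\underline{U}(f,P),\underline{L}(f,P)$ are the upper and lower Darboux $\nabla$-sums (built from $(t_{i-1},t_i]$). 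This is the whole point of the chosen suprema and infima over half-open versus half-closed cells: the diamond-alpha sum is by construction the $\alpha$-weighted mixture of the delta and nabla sums.

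Next I would pass to infima over all partitions. The hypothesis that $f$ is $\Delta$- and $\nabla$-integrable means $\overline{U}(f)=\overline{L}(f)=\int_a^b f\,\Delta t$ and $\underline{U}(f)=\underline{L}(f)=\int_a^b f\,\nabla t$. Using the elementary inequality that an infimum of a sum is at least the sum of the infima, together with the reverse bound obtained by choosing a common refinement of partitions that are nearly optimal for each of the two classical integrals simultaneously, I would show
\begin{equation*}
U(f)=\inf_P U(f,P)=\alpha\,\overline{U}(f)+(1-\alpha)\,\underline{U}(f),
\end{equation*}
and the analogous identity $L(f)=\alpha\,\overline{L}(f)+(1-\alpha)\,\underline{L}(f)$ for the lower integral. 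Since the text already asserts that equality \eqref{eq:DAI:JF} follows from the properties of supremum and infimum, this step is precisely what supplies those identities.

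Finally I would combine these. Under the integrability hypotheses the four classical upper/lower integrals collapse in pairs, so
\begin{equation*}
U(f)=\alpha\!\int_a^b\! f\,\Delta t+(1-\alpha)\!\int_a^b\! f\,\nabla t=L(f),
\end{equation*}
whence $U(f)=L(f)$ and $f$ is $\diamond_{\alpha}$-integrable with the stated value. The main obstacle is the middle step: one must justify that the infimum of the $\alpha$-mixture equals the $\alpha$-mixture of the infima. The nontrivial direction is $U(f)\ge\alpha\,\overline{U}(f)+(1-\alpha)\,\underline{U}(f)$, which I would handle by a refinement argument — given $\varepsilon>0$, take partitions $P_1,P_2$ with $\overline{U}(f,P_1)<\overline{U}(f)+\varepsilon$ and $\underline{U}(f,P_2)<\underline{U}(f)+\varepsilon$, pass to $P=P_1\cup P_2$, and use that refining a partition does not increase either upper sum — so that $U(f,P)$ is within $\varepsilon$ of the desired mixture. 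Everything else is bookkeeping with convex combinations and the standard monotonicity of Darboux sums under refinement.
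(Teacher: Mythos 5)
Your proof is correct and takes essentially the same route the paper intends: the paper offers no detailed argument at all, merely remarking just before the corollary that, given the construction of $U(f)$ and $L(f)$, the equality follows ``from the properties of supremum and infimum,'' and your splitting $U(f,P)=\alpha\overline{U}(f,P)+(1-\alpha)\underline{U}(f,P)$, $L(f,P)=\alpha\overline{L}(f,P)+(1-\alpha)\underline{L}(f,P)$ together with the common-refinement step supplies exactly those missing details. One minor slip: the direction you label nontrivial, $U(f)\ge\alpha\overline{U}(f)+(1-\alpha)\underline{U}(f)$, is in fact the elementary one (every $U(f,P)$ dominates the mixture of the infima), while your refinement argument actually proves the reverse inequality $U(f)\le\alpha\overline{U}(f)+(1-\alpha)\underline{U}(f)$, which is the direction that genuinely needs it.
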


Now, suppose that $f$ is $\diamond_{\alpha}$-integrable from $a$ to
$b$. Then $L(f)=U(f)$ and
\begin{eqnarray*}
\alpha\overline{U}(f)+(1-\alpha)\underline{U}(f)=\alpha\overline{L}(f)+(1-\alpha)\underline{L}(f),\\
\alpha(\overline{U}(f)-\overline{L}(f))=(1-\alpha)(\underline{L}(f)-\underline{U}(f)).
\end{eqnarray*}
Since $\overline{U}(f)\geq \overline{L}(f)$ and
$\underline{U}(f)\geq \underline{L}(f)$, we get the following
result.
\begin{Cor}\label{essential}
Let $f$ be $\diamond_{\alpha}$-integrable from $a$ to $b$.

(i) If $\alpha =1$, then $f$ is $\Delta$-integrable from $a$ to $b$.

(ii) If $\alpha =0$, then $f$ is $\nabla$-integrable from $a$ to
$b$.

(iii) If $0< \alpha <1$, then $f$ is $\Delta$-integrable and
$\nabla$-integrable from $a$ to $b$.
\end{Cor}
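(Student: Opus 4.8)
The plan is to derive all three statements from the single identity
\begin{equation*}
\alpha(\overline{U}(f)-\overline{L}(f))=(1-\alpha)(\underline{L}(f)-\underline{U}(f))
\end{equation*}
obtained just above the statement, combined with the two elementary facts recorded in the construction: a bounded function is $\Delta$-integrable (respectively $\nabla$-integrable) from $a$ to $b$ exactly when its upper and lower Darboux $\Delta$-integrals (respectively $\nabla$-integrals) agree, and in each of the two theories the upper integral never falls below the lower one. No new estimate is needed beyond these.

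First I would record the signs of the two sides. By definition of the Darboux sums one has $\overline{U}(f)\geq\overline{L}(f)$ and $\underline{U}(f)\geq\underline{L}(f)$, so the left-hand side of the identity is nonnegative while the right-hand side is nonpositive. Since the two sides coincide, both must equal zero; equivalently, the two nonnegative products $\alpha(\overline{U}(f)-\overline{L}(f))$ and $(1-\alpha)(\underline{U}(f)-\underline{L}(f))$ vanish. This single observation settles every case once the value of $\alpha$ is specified.

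For part (i) I set $\alpha=1$, so that $\alpha(\overline{U}(f)-\overline{L}(f))=0$ with $\alpha\neq 0$ yields $\overline{U}(f)=\overline{L}(f)$ and hence $\Delta$-integrability; part (ii) is the mirror image, taking $\alpha=0$ to force $\underline{U}(f)=\underline{L}(f)$ and thus $\nabla$-integrability. For part (iii) the point is that when $0<\alpha<1$ both scalars $\alpha$ and $1-\alpha$ are strictly positive, so the vanishing of the two products established in the preceding paragraph gives simultaneously $\overline{U}(f)=\overline{L}(f)$ and $\underline{U}(f)=\underline{L}(f)$, whence $f$ is both $\Delta$- and $\nabla$-integrable.

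The case analysis itself is thus routine sign bookkeeping; the genuine content lies entirely in the decompositions $U(f)=\alpha\overline{U}(f)+(1-\alpha)\underline{U}(f)$ and $L(f)=\alpha\overline{L}(f)+(1-\alpha)\underline{L}(f)$ that produce the displayed identity, and this is the step I would expect to be the main obstacle were a fully self-contained argument demanded. The subtlety there is that an infimum (respectively supremum) of a weighted sum of two partition functionals need not split as the corresponding weighted sum of infima in general; it does so here only because both the upper $\Delta$-sums and the upper $\nabla$-sums decrease under refinement over the common family $\mathcal{P}(a,b)$, so that a single common refinement drives both toward their separate infima at once. Granting those decompositions, which the construction of $U(f)$ and $L(f)$ already supplies, the corollary follows immediately from the three cases above.
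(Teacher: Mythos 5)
Your proof is correct and follows essentially the same route as the paper: the paper likewise combines the decompositions $U(f)=\alpha\overline{U}(f)+(1-\alpha)\underline{U}(f)$ and $L(f)=\alpha\overline{L}(f)+(1-\alpha)\underline{L}(f)$ with $U(f)=L(f)$ to get the identity $\alpha(\overline{U}(f)-\overline{L}(f))=(1-\alpha)(\underline{L}(f)-\underline{U}(f))$, and then reads off all three cases from the signs of the two sides, exactly as you do. Your closing remark on why the infimum and supremum split across the weighted sum (monotonicity of Darboux sums under refinement plus a common refinement) supplies a justification that the paper only asserts with the phrase that the equality ``follows from the properties of supremum and infimum.''
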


\begin{Example}
Note that the strict inequalities in (iii) of the above corollary
are necessary. Consider the function $f(t)=\frac{1}{t}$ and the time scale $\mathbb{T}= \mathbb{Z}$.
We have $\int_{-1}^{0}f(t)\Delta
t=-1$ and $\int_{0}^{1}f(t)\nabla t=1$. However, both $\int_{0}^{1}f(t)\Delta t$ and
$\int_{-1}^{0}f(t)\nabla t$ do not exist.
\end{Example}

The following theorems may be showed in the same way as Theorem~5.5 and Theorem~5.6 in \cite{BG}.

\begin{Th}\label{someP}
If $U(f,P)=L(f,P)$ for some $P\in\mathcal{P}(a,b)$, then the
function $f$ is $\diamond_{\alpha}$-integrable from $a$ to $b$
and $\int_{a}^{b}f(t)\diamond_{\alpha}t=U(f,P)=L(f,P)$.
\end{Th}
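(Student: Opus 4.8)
The plan is to reduce the statement to the two structural facts underlying any Darboux theory: that passing to a refinement of a partition does not decrease the lower sum nor increase the upper sum, and that consequently $L(f)\le U(f)$. Once these are available, the conclusion follows by squeezing the two Darboux integrals between the equal sums $L(f,P)=U(f,P)$.

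First I would prove the refinement lemma: if $P,Q\in\mathcal{P}(a,b)$ with $P\subseteq Q$, then $L(f,P)\le L(f,Q)$ and $U(f,Q)\le U(f,P)$. By induction it suffices to treat $Q=P\cup\{c\}$ with $c\in(t_{i-1},t_{i})$, since then all terms of the Darboux sums except the $i$-th are unchanged. For the upper sum I would compare the single term $(\alpha\overline{M_{i}}+(1-\alpha)\underline{M_{i}})(t_{i}-t_{i-1})$ with the two terms produced by the split. Because the supremum over a set dominates the supremum over any subset, $\overline{M_{i}}$ is at least the corresponding sup over each of $[t_{i-1},c)$ and $[c,t_{i})$, and $\underline{M_{i}}$ is at least the sup over each of $(t_{i-1},c]$ and $(c,t_{i}]$; as $\alpha,1-\alpha\ge0$, the weighted combination over the whole interval dominates the sum of the weighted combinations over the two pieces, giving $U(f,Q)\le U(f,P)$. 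The dual monotonicity of the infimum yields $L(f,P)\le L(f,Q)$. This is exactly the adaptation of Theorem~5.5 of \cite{BG} to the diamond-$\alpha$ weights.

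From the lemma I would deduce $L(f)\le U(f)$: for any two partitions $P_{1},P_{2}$ their common refinement $Q=P_{1}\cup P_{2}$ satisfies
\begin{equation*}
L(f,P_{1})\le L(f,Q)\le U(f,Q)\le U(f,P_{2}),
\end{equation*}
the middle inequality being the term-by-term bound $\overline{m_{j}}\le\overline{M_{j}}$, $\underline{m_{j}}\le\underline{M_{j}}$. Taking the supremum over $P_{1}$ and then the infimum over $P_{2}$ gives $L(f)\le U(f)$. Now for the distinguished partition $P$ of the hypothesis, the definitions of $U(f)$ as an infimum and $L(f)$ as a supremum produce the chain
\begin{equation*}
L(f,P)\le L(f)\le U(f)\le U(f,P).
\end{equation*}
When $U(f,P)=L(f,P)$ the two ends coincide, forcing equality throughout; in particular $L(f)=U(f)$, so $f$ is $\diamond_{\alpha}$-integrable on $[a,b]$ and $\int_{a}^{b}f(t)\diamond_{\alpha}t=U(f,P)=L(f,P)$.

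I expect the only real work to be the refinement lemma, and within it the bookkeeping for the two families of extrema---the left-closed $\overline{M_{i}},\overline{m_{i}}$ and the right-closed $\underline{M_{i}},\underline{m_{i}}$---peculiar to the diamond-$\alpha$ construction. Since monotonicity of $\sup$ and $\inf$ under inclusion handles both families at once and the weights $\alpha$ and $1-\alpha$ are nonnegative, no difficulty arises beyond the classical Riemann--Darboux case.
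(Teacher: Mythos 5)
Your proposal is correct, and it is essentially the proof the paper has in mind: the paper does not write out an argument but states that Theorem~\ref{someP} ``may be showed in the same way as Theorem~5.5 \ldots in \cite{BG}'', which is precisely the classical Darboux scheme you carry out --- refinement lemma, the consequence $L(f)\le U(f)$ via common refinements, and the squeeze $L(f,P)\le L(f)\le U(f)\le U(f,P)$. Your adaptation of the refinement step to the two families of suprema/infima (over $[t_{i-1},t_i)$ and $(t_{i-1},t_i]$) with the nonnegative weights $\alpha$ and $1-\alpha$ is exactly the bookkeeping needed to transfer the argument of \cite{BG} to the diamond-$\alpha$ sums.
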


\begin{Th}\label{epsilon}
A bounded function $f$ on $[a,b]$ is $\diamond_{\alpha}$-integrable
if and only if for each $\varepsilon >0$ there exists
$P\in\mathcal{P}(a,b)$ such that $U(f,P)-L(f,P)< \varepsilon$.
\end{Th}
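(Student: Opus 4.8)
The plan is to establish the standard Riemann (Cauchy) criterion, the only genuinely new ingredient being a refinement lemma adapted to the half-open intervals that appear in the upper and lower Darboux $\diamond_{\alpha}$-sums. Before addressing either implication I would first prove that refining a partition does not increase the upper sum and does not decrease the lower sum: if $P'\supseteq P$, then $U(f,P')\leq U(f,P)$ and $L(f,P')\geq L(f,P)$. By induction it suffices to treat the insertion of a single point $c$ with $t_{i-1}<c<t_{i}$. On the delta part one uses $[t_{i-1},c)\subseteq[t_{i-1},t_{i})$ and $[c,t_{i})\subseteq[t_{i-1},t_{i})$ to get $\sup_{[t_{i-1},c)}f\leq\overline{M_{i}}$ and $\sup_{[c,t_{i})}f\leq\overline{M_{i}}$, and on the nabla part one argues identically with $(t_{i-1},c]$ and $(c,t_{i}]$ against $\underline{M_{i}}$; weighting by the subinterval lengths and summing then yields the claimed monotonicity. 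Taking a common refinement $R=P\cup Q$ of arbitrary partitions $P,Q$ and combining the monotonicity with the inequality $L(f,R)\leq U(f,R)$ from \eqref{sd} gives $L(f,P)\leq L(f,R)\leq U(f,R)\leq U(f,Q)$, hence $L(f)\leq U(f)$.

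For the sufficiency direction, I would assume that for every $\varepsilon>0$ there is a partition $P$ with $U(f,P)-L(f,P)<\varepsilon$. From the definitions of $U(f)$ and $L(f)$ as infimum and supremum together with $L(f)\leq U(f)$ one has the chain $L(f,P)\leq L(f)\leq U(f)\leq U(f,P)$, so $0\leq U(f)-L(f)\leq U(f,P)-L(f,P)<\varepsilon$. As $\varepsilon$ is arbitrary this forces $U(f)=L(f)$, that is, $f$ is $\diamond_{\alpha}$-integrable.

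For the necessity direction, I would assume $f$ is $\diamond_{\alpha}$-integrable, so $U(f)=L(f)$, and fix $\varepsilon>0$. By the defining infimum and supremum properties there exist $P_{1},P_{2}\in\mathcal{P}(a,b)$ with $U(f,P_{1})<U(f)+\frac{\varepsilon}{2}$ and $L(f,P_{2})>L(f)-\frac{\varepsilon}{2}$. Setting $P=P_{1}\cup P_{2}$ and invoking the refinement lemma gives $U(f,P)\leq U(f,P_{1})$ and $L(f,P)\geq L(f,P_{2})$, whence $U(f,P)-L(f,P)<\left(U(f)+\frac{\varepsilon}{2}\right)-\left(L(f)-\frac{\varepsilon}{2}\right)=\varepsilon$, using $U(f)=L(f)$.

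The main obstacle is the refinement lemma, and specifically checking that the monotonicity survives the asymmetry between the half-open intervals $[t_{i-1},t_{i})$ used for the delta suprema and infima and $(t_{i-1},t_{i}]$ used for the nabla ones; once that bookkeeping is carried out, the two implications are the classical sandwich and common-refinement arguments. This mirrors the treatment of the Riemann $\Delta$-integral referenced as Theorems~5.5 and~5.6 in \cite{BG}.
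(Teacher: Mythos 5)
Your proof is correct and follows essentially the same route as the paper, which proves this theorem by deferring to the classical Darboux argument of Theorems~5.5 and~5.6 in \cite{BG}: the refinement monotonicity lemma (checked against the mixed intervals $[t_{i-1},t_i)$ and $(t_{i-1},t_i]$), the common-refinement inequality $L(f)\leq U(f)$, and the standard sandwich and $\varepsilon/2$ arguments. Your explicit verification that the half-open-interval bookkeeping goes through is exactly the adaptation the paper leaves implicit.
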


\begin{Lemma}[\cite{BG}]
\label{delta}
For every $\delta >0$ there exists some partition $P\in
\mathcal{P}(a,b)$ given by $a=t_{0}< t_{1}<\cdots<t_{n}=b$ such that
for each $i\in\{1,2,\ldots,n\}$ either $t_{i}-t_{i-1}\leq \delta$ or
$t_{i}-t_{i-1}> \delta$ and $\rho(t_{i})=t_{i-1}$.
\end{Lemma}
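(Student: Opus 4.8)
The plan is to argue by a supremum (``induction on the time scale'') argument, exploiting the key observation that a subinterval of length exceeding $\delta$ can be tolerated only when it corresponds to a single jump of the time scale, i.e.\ when its right endpoint $t_i$ is right-scattered with $\rho(t_i)=t_{i-1}$. Call a point $t\in[a,b]$ \emph{good} if the interval $[a,t]$ admits a partition of the required type (with $t=a$ good via the empty partition), and let $B$ denote the set of good points. I want to show $b\in B$.

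The heart of the argument is a one-step extension claim: if $s\in[a,b)$ is good, then some $t^*\in(s,b]$ is good as well. To see this I distinguish two cases according to the behaviour of $\sigma$ at $s$. If $s$ is right-dense, then points of $\mathbb{T}$ accumulate at $s$ from the right, so I may choose $t^*\in\mathbb{T}$ with $s<t^*\leq\min(s+\delta,b)$; the new subinterval $[s,t^*]$ then has length at most $\delta$, and the first alternative holds. If instead $s$ is right-scattered, I set $t^*=\sigma(s)$, which satisfies $t^*\leq b$ because $b\in\mathbb{T}$ lies to the right of $s$; here $\rho(\sigma(s))=s$, so whichever of $t^*-s\leq\delta$ or $t^*-s>\delta$ occurs, one of the two stated alternatives is met. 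Appending $t^*$ to a good partition of $[a,s]$ yields a good partition of $[a,t^*]$.

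With the extension step in hand I set $s=\sup B$, which exists since $a\in B$ and $B\subseteq[a,b]$. I claim $s\in B$. If $s$ is left-scattered, then $(\rho(s),s]\cap\mathbb{T}=\{s\}$, and since $\sup B=s$ there is a good point in $(\rho(s),s]$, forcing that point to be $s$ itself. If $s$ is left-dense, I pick a good point $t$ with $s-\delta<t<s$ (trivial if $s$ is already good); the subinterval $[t,s]$ then has length less than $\delta$, so appending $s$ to a good partition of $[a,t]$ shows that $s$ is good. In either case $s\in B$. Finally, if $s<b$ the extension step would produce a good point strictly larger than $\sup B$, a contradiction; hence $s=b$ and $b\in B$, which is the assertion.

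The step I expect to require the most care is verifying $s\in B$ at the supremum: one must treat the left-dense and left-scattered alternatives separately and confirm that the supremum is genuinely attained, rather than merely approached. The remaining bookkeeping—checking that the length bounds line up with the two stated alternatives—is routine once the right-dense/right-scattered dichotomy driving the extension step is in place.
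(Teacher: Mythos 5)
Your proof is correct, but it cannot be ``the same as the paper's'' for the simple reason that the paper gives no proof of this lemma at all: it is quoted verbatim from the reference of Bohner and Guseinov, where it is established by a direct construction (essentially a greedy march from $a$ to $b$: from the current point one advances either to a point of $\mathbb{T}$ within distance $\delta$, or, when $\mathbb{T}$ has no points in the next $\delta$-window, to $\sigma$ of the current point, which forces $\rho(t_i)=t_{i-1}$; one then checks the process reaches $b$ in finitely many steps). Your supremum argument --- let $B$ be the set of $t$ for which $[a,t]$ admits an admissible partition, prove a one-step extension property, and show $\sup B$ is itself good and hence equals $b$ --- is a genuinely different, self-contained route; it trades the termination bookkeeping of the constructive proof for a continuity-induction argument, and both hinge on the same dichotomy (if $s$ is right-dense, $\mathbb{T}$ has points in $(s,s+\delta]$; if $s$ is right-scattered, then $\rho(\sigma(s))=s$, so a subinterval longer than $\delta$ is permitted). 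Two points you leave tacit but which are easily supplied: first, $\sup B\in\mathbb{T}$ because $B\subset\mathbb{T}$ and $\mathbb{T}$ is closed --- you use this implicitly when you speak of $\rho(s)$ and of $s$ being left-dense or left-scattered; second, all partition points you produce must lie in $\mathbb{T}$, which your construction does ensure (you always append points of $\mathbb{T}$), but it is worth saying explicitly since $\mathcal{P}(a,b)$ consists of subsets of the time-scale interval $[a,b]$.
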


We denote by $\mathcal{P}_{\delta}=\mathcal{P}_{\delta}(a,b)$ the
set of all $P\in \mathcal{P}(a,b)$ that possess the property
indicated in Lemma \ref{delta}.

\begin{Th}
A bounded function $f$ on $[a,b]$ is $\diamond_{\alpha}$-integrable
if and only if for each $\varepsilon >0$ there exists $\delta>0$
such that
\begin{equation}
\label{eq:imp:i}
P\in\mathcal{P}_{\delta}(a,b) \Rightarrow
U(f,P)-L(f,P)< \varepsilon \, .
\end{equation}
\end{Th}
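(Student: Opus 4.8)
The plan is to prove the two implications separately, using as black boxes the Riemann-type criterion of Theorem~\ref{epsilon} and the existence statement of Lemma~\ref{delta}. The implication ``$\Leftarrow$'' is immediate: if for each $\varepsilon>0$ there is $\delta>0$ for which \eqref{eq:imp:i} holds, then by Lemma~\ref{delta} the class $\mathcal{P}_{\delta}(a,b)$ is nonempty, so any $P\in\mathcal{P}_{\delta}(a,b)$ satisfies $U(f,P)-L(f,P)<\varepsilon$, and Theorem~\ref{epsilon} yields that $f$ is $\diamond_{\alpha}$-integrable. All the work is in ``$\Rightarrow$'', which asserts that $U(f,P)-L(f,P)$ is uniformly small over the \emph{entire} class $\mathcal{P}_{\delta}(a,b)$, not merely for a single partition.

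For that direction I would first fix $\varepsilon>0$ and, using Theorem~\ref{epsilon}, select one fixed partition $P^{*}=\{s_{0},\dots,s_{N}\}$ with $U(f,P^{*})-L(f,P^{*})<\varepsilon/2$. Given an arbitrary $P\in\mathcal{P}_{\delta}(a,b)$, with $\delta$ still to be chosen, I would pass to the common refinement $Q=P\cup P^{*}$ and decompose
\[
U(f,P)-L(f,P)=\bigl[U(f,P)-U(f,Q)\bigr]+\bigl[U(f,Q)-L(f,Q)\bigr]+\bigl[L(f,Q)-L(f,P)\bigr].
\]
Since $Q$ refines $P^{*}$, monotonicity of the Darboux $\diamond_{\alpha}$-sums under refinement bounds the middle bracket by $U(f,P^{*})-L(f,P^{*})<\varepsilon/2$.

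The decisive, time-scale-specific step is to control the two outer brackets through the defining property of $\mathcal{P}_{\delta}(a,b)$: each subinterval $[t_{i-1},t_{i})$ of $P$ either has length $\le\delta$ or satisfies $\rho(t_{i})=t_{i-1}$. In the second case $(t_{i-1},t_{i})$ contains no point of $\mathbb{T}$, hence none of $P^{*}$, so forming $Q$ inserts points only into the \emph{short} subintervals, each of length at most $\delta$. Writing $K=\sup_{t\in[a,b]}|f(t)|<\infty$ and using $0\le\alpha\le1$, a single insertion alters the corresponding upper (resp.\ lower) contribution by at most $2K\delta$, and at most $N$ subintervals are affected, so $0\le U(f,P)-U(f,Q)\le 2NK\delta$ and $0\le L(f,Q)-L(f,P)\le 2NK\delta$. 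Combining gives $U(f,P)-L(f,P)<\varepsilon/2+4NK\delta$, and it suffices to take $\delta>0$ with $4NK\delta<\varepsilon/2$ (any $\delta$ if $K=0$). Since $N$ and $K$ are determined before $\delta$, this choice is legitimate and serves every $P\in\mathcal{P}_{\delta}(a,b)$ simultaneously.

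I expect the main obstacle to be precisely the observation that the long subintervals of a partition in $\mathcal{P}_{\delta}(a,b)$ are genuine gaps of the time scale and therefore receive no points of $P^{*}$ under refinement; this is what confines the entire refinement error to the short subintervals and lets the parameter $\delta$, rather than the possibly large lengths $t_{i}-t_{i-1}$, govern the estimate. The remaining ingredient, the per-interval bound $2K\delta$ on the change of a Darboux $\diamond_{\alpha}$-contribution when a point is inserted, is routine and follows from $|f|\le K$ together with $0\le\alpha\le1$.
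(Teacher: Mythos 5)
Your proof is correct, but it takes a genuinely different route from the paper's. Both arguments dispatch the ``$\Leftarrow$'' direction the same way, via Theorem~\ref{epsilon} (with Lemma~\ref{delta} guaranteeing $\mathcal{P}_{\delta}(a,b)\neq\emptyset$). For ``$\Rightarrow$'', however, the paper argues by reduction: it splits into the cases $\alpha\in\{0,1\}$ and $0<\alpha<1$, invokes Corollary~\ref{essential} to conclude that $f$ is $\Delta$- and $\nabla$-integrable, quotes \cite[Theorem~5.9]{BG} as a black box to get $\delta_{1},\delta_{2}$ controlling the delta and nabla Darboux sums separately, and then combines the two estimates using the decomposition $U(f,P)=\alpha\overline{U}(f,P)+(1-\alpha)\underline{U}(f,P)$ (and similarly for $L$). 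You instead work directly with the $\diamond_{\alpha}$-sums: a single good partition $P^{*}$ from Theorem~\ref{epsilon}, the common refinement $Q=P\cup P^{*}$, refinement monotonicity, and the key time-scale observation that the long subintervals of any $P\in\mathcal{P}_{\delta}(a,b)$ are gaps of $\mathbb{T}$ (since $\rho(t_{i})=t_{i-1}$) and hence receive no points of $P^{*}$, confining the refinement error to the short subintervals, where it is at most $4NK\delta$. In effect you re-prove the content of \cite[Theorem~5.9]{BG} at the level of diamond-alpha sums instead of citing it twice; this is essentially Bohner--Guseinov's own method, transplanted. What your route buys: it is self-contained (no external citation, no appeal to Corollary~\ref{essential}) and treats all $\alpha\in[0,1]$ uniformly with no case distinction. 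What the paper's route buys: brevity, and it sidesteps the one fact you use without proof, namely that the mixed sums built from $\alpha\overline{M_{i}}+(1-\alpha)\underline{M_{i}}$ are monotone under refinement; this is true and routine (for $t_{i-1}<s<t_{i}$ one has $\sup_{[t_{i-1},s)}f\le\overline{M_{i}}$, $\sup_{(t_{i-1},s]}f\le\underline{M_{i}}$, and likewise for the right piece, so splitting an interval can only decrease the upper contribution and increase the lower one), but strictly speaking it is an extra lemma your argument needs and the paper's does not.
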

\begin{proof}
By Theorem \ref{epsilon} condition \eqref{eq:imp:i} implies integrability.
Conversely, suppose that $f$ is $\diamond_{\alpha}$-integrable from
$a$ to $b$. If $\alpha=1$ or $\alpha=0$, then f is
$\Delta$-integrable from $a$ to $b$ or $\nabla$-integrable from $a$
to $b$. Therefore condition \eqref{eq:imp:i} holds (see \cite{BG}). Now, let
$0<\alpha <1$. By Corollary \ref{essential}, $f$ is
$\Delta$-integrable and $\nabla$-integrable from $a$ to $b$. According to \cite[Theorem~5.9]{BG}, for each $\varepsilon >0$
there exist $\delta_{1}>0$ and $\delta_{2}>0$ such that
$P_{1}\in\mathcal{P}_{\delta_{1}}(a,b)$ implies
$\overline{U}(f,P_{1})-\overline{L}(f,P_{1})< \varepsilon$ and
$P_{2}\in\mathcal{P}_{\delta_{2}}(a,b)$ implies
$\underline{U}(f,P_{2})-\underline{L}(f,P_{2})< \varepsilon$.
If $P\in\mathcal{P}_{\delta}(a,b)$ where $\delta=min\{\delta_{1},
\delta_{2}\}$, then we have $U(f,P)-L(f,P)=\alpha
\overline{U}(f,P)+(1-\alpha)\underline{U}(f,P)-\alpha
\overline{L}(f,P)-(1-\alpha)\underline{L}(f,P)< \varepsilon$.
\end{proof}

Let $f$ be a bounded function on $[a,b]$ and let
$P\in\mathcal{P}(a,b)$ be given by $a=t_{0}<t_{1}<\cdots<t_{n}=b$. For
$1\leq i \leq n$, choose arbitrary points $\overline{\xi} _{i}$ in
$[t_{i-1},t_{i})$, $\underline{\xi}_{i}$ in $(t_{i-1},t_{i}]$, and
form the sum
$$
S= \Sigma_{i=1}^{n}(\alpha
f(\overline{\xi}_{i})+(1-\alpha)f(\underline{\xi}_{i}))(t_{i}-t_{i-1}) \, .
$$
We call $S$ a Riemann $\diamond_{\alpha}$-sum of $f$ corresponding to $P\in\mathcal{P}(a,b)$. We say that $f$ is Riemann
$\diamond_{\alpha}$-integrable from $a$ to $b$ if there exists a
real number $I$ with the following property: for each $\varepsilon
>0$ there exists $\delta>0$ such that
$P\in\mathcal{P}_{\delta}(a,b)$ implies $|S-I|< \varepsilon$
independent of the choice of $\overline{\xi} _{i}$,
$\underline{\xi}_{i}$ for $1\leq i \leq n$. The number $I$ is called
the Riemann $\diamond_{\alpha}$-integral of $f$ from $a$ to $b$.

\begin{Th}\label{Riemann}
If $f$ is Riemann $\Delta$-integrable and Riemann $\nabla$-integrable
from $a$ to $b$, then it is Riemann $\diamond_{\alpha}$-integrable
from $a$ to $b$ and $I=\alpha\int_{a}^{b}f(t)\Delta
t+(1-\alpha)\int_{a}^{b}f(t)\nabla t$.
\end{Th}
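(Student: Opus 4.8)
The plan is to exploit the linearity of the Riemann $\diamond_{\alpha}$-sum in $\alpha$, writing it as a convex combination of a Riemann $\Delta$-sum and a Riemann $\nabla$-sum, and then to control the two pieces separately using the two integrability hypotheses. First I would fix a partition $P\in\mathcal{P}(a,b)$ given by $a=t_{0}<t_{1}<\cdots<t_{n}=b$ together with admissible tags $\overline{\xi}_{i}\in[t_{i-1},t_{i})$ and $\underline{\xi}_{i}\in(t_{i-1},t_{i}]$, and split the corresponding sum as
$$
S=\alpha\, S_{\Delta}+(1-\alpha)\, S_{\nabla},\qquad S_{\Delta}=\sum_{i=1}^{n} f(\overline{\xi}_{i})(t_{i}-t_{i-1}),\quad S_{\nabla}=\sum_{i=1}^{n} f(\underline{\xi}_{i})(t_{i}-t_{i-1}).
$$
Here $S_{\Delta}$ is precisely a Riemann $\Delta$-sum of $f$ (its tags lie in $[t_{i-1},t_{i})$) and $S_{\nabla}$ is a Riemann $\nabla$-sum (its tags lie in $(t_{i-1},t_{i}]$). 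The essential structural observation is that the two families of tags $\overline{\xi}_{i}$ and $\underline{\xi}_{i}$ are chosen \emph{independently}, so that as the tags vary, $S_{\Delta}$ and $S_{\nabla}$ range independently over all Riemann $\Delta$- and $\nabla$-sums attached to $P$.

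Next, I would set $I:=\alpha I_{\Delta}+(1-\alpha)I_{\nabla}$, where $I_{\Delta}=\int_{a}^{b} f(t)\Delta t$ and $I_{\nabla}=\int_{a}^{b} f(t)\nabla t$ are the Riemann $\Delta$- and $\nabla$-integrals supplied by the hypothesis, and verify that this $I$ witnesses Riemann $\diamond_{\alpha}$-integrability. Given $\varepsilon>0$, Riemann $\Delta$-integrability yields $\delta_{1}>0$ such that $P\in\mathcal{P}_{\delta_{1}}(a,b)$ forces $|S_{\Delta}-I_{\Delta}|<\varepsilon$ for every admissible choice of the $\overline{\xi}_{i}$; symmetrically, $\nabla$-integrability yields $\delta_{2}>0$ controlling $|S_{\nabla}-I_{\nabla}|$. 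Putting $\delta=\min\{\delta_{1},\delta_{2}\}$ and noting the monotonicity $\mathcal{P}_{\delta}(a,b)\subseteq\mathcal{P}_{\delta_{1}}(a,b)\cap\mathcal{P}_{\delta_{2}}(a,b)$ (a partition all of whose gaps are $\le\delta$ or are bridged by $\rho(t_{i})=t_{i-1}$ retains this property for any larger threshold), any $P\in\mathcal{P}_{\delta}(a,b)$ satisfies both estimates simultaneously.

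Finally, the triangle inequality together with the convex weights $\alpha,1-\alpha\in[0,1]$ gives
$$
|S-I|\le\alpha\,|S_{\Delta}-I_{\Delta}|+(1-\alpha)\,|S_{\nabla}-I_{\nabla}|<\alpha\varepsilon+(1-\alpha)\varepsilon=\varepsilon,
$$
uniformly in the choice of $\overline{\xi}_{i},\underline{\xi}_{i}$. This is exactly the defining property of Riemann $\diamond_{\alpha}$-integrability, with integral $I=\alpha\int_{a}^{b} f(t)\Delta t+(1-\alpha)\int_{a}^{b} f(t)\nabla t$, as claimed.

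I do not expect a serious obstacle here, since the argument reduces to a clean convex-combination estimate. The only point that genuinely needs care is the \emph{independence} of the two tag families: one must ensure that the bound on $|S_{\Delta}-I_{\Delta}|$ holds for \emph{all} choices of $\overline{\xi}_{i}$ (so that the $\diamond_{\alpha}$-sum's simultaneous dependence on the freely chosen $\underline{\xi}_{i}$ cannot spoil it), and conversely for the $\nabla$-piece. Provided the Riemann $\Delta$- and $\nabla$-integrals from \cite{BG} are set up over the same partition class $\mathcal{P}_{\delta}(a,b)$ with these tag conventions, the decomposition is legitimate and the proof is complete.
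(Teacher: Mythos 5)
Your proposal is correct and follows essentially the same argument as the paper's own proof: decompose the Riemann $\diamond_{\alpha}$-sum as $S=\alpha S_{\Delta}+(1-\alpha)S_{\nabla}$, take $\delta=\min\{\delta_{1},\delta_{2}\}$ from the two integrability hypotheses, and conclude by the triangle inequality with the convex weights. Your explicit verification of the inclusion $\mathcal{P}_{\delta}(a,b)\subseteq\mathcal{P}_{\delta_{1}}(a,b)\cap\mathcal{P}_{\delta_{2}}(a,b)$ and of the independence of the two tag families is a point the paper leaves implicit, but it is the same proof.
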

\begin{proof}
Assume that $f$ is Riemann $\Delta$-integrable and Riemann
$\nabla$-integrable from $a$ to $b$. Then, for each $\varepsilon >0$
there exist $\delta_{1}>0$  and $\delta_{2}>0$ such that
$P_{1}\in\mathcal{P}_{\delta_{1}}(a,b)$ implies
$|\overline{S}-\int_{a}^{b}f(t)\Delta t|< \varepsilon$, and
$P_{2}\in\mathcal{P}_{\delta_{2}}(a,b)$ implies
$|\underline{S}-\int_{a}^{b}f(t)\nabla t|< \varepsilon$, where
$\overline{S}$ is the Riemann $\Delta$-sum of $f$ corresponding to
$P_{1}$, and $\underline{S}$ is the Riemann $\nabla$-sum of $f$
corresponding to $P_{2}$. Now, if $P\in\mathcal{P}_{\delta}(a,b)$
with $\delta=min\{\delta_{1}, \delta_{2}\}$, then we have
\begin{equation*}
\begin{split}
\Bigl|S-\alpha\int_{a}^{b}f(t)\Delta t &- (1-\alpha)\int_{a}^{b}f(t)\nabla t\Bigr| \\
&=\left|\alpha
\overline{S}+(1-\alpha)\underline{S}-\alpha\int_{a}^{b}f(t)\Delta
t-(1-\alpha)\int_{a}^{b}f(t)\nabla t\right|\\
&\leq
\left|\alpha\overline{S}-\alpha\int_{a}^{b}f(t)\Delta
t\right|+\left|(1-\alpha)\underline{S}-(1-\alpha)\int_{a}^{b}f(t)\nabla t\right|\\
&\leq \varepsilon \, .
\end{split}
\end{equation*}
Thus, $f$ is Riemann $\diamond_{\alpha}$-integral
from $a$ to $b$ and
$I=\alpha\int_{a}^{b}f(t)\Delta
t+(1-\alpha)\int_{a}^{b}f(t)\nabla t$.
\end{proof}

By construction of the $\diamond_{\alpha}$-Riemann sum $S$,
the following theorem may be proved in much the same way as
\cite[Theorem~5.11]{BG}.

\begin{Th}\label{integralRD}
A bounded function $f$ on $[a,b]$ is Riemann
$\diamond_{\alpha}$-integrable if and only if it is Darboux
$\diamond_{\alpha}$-integrable, in which case the values of the integrals are equal.
\end{Th}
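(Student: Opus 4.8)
The plan is to prove both implications from a single comparison between a Riemann $\diamond_{\alpha}$-sum and the Darboux sums over the \emph{same} partition. First I would record the sandwich inequality: for any $P\in\mathcal{P}(a,b)$ and any admissible tags $\overline{\xi}_i\in[t_{i-1},t_i)$, $\underline{\xi}_i\in(t_{i-1},t_i]$, the definitions of $\overline{M_i},\overline{m_i},\underline{M_i},\underline{m_i}$ give $\overline{m_i}\le f(\overline{\xi}_i)\le\overline{M_i}$ and $\underline{m_i}\le f(\underline{\xi}_i)\le\underline{M_i}$; multiplying by the nonnegative weights $\alpha$, $1-\alpha$, $t_i-t_{i-1}$ and summing yields
\begin{equation*}
L(f,P)\le S\le U(f,P).
\end{equation*}
I would then note the sharpness of this sandwich: since $\overline{M_i}$ and $\underline{M_i}$ are suprema over the nonempty half-open sets $[t_{i-1},t_i)$ and $(t_{i-1},t_i]$, for every $\varepsilon'>0$ the tags can be chosen so that $S>U(f,P)-\varepsilon'$, and likewise so that $S<L(f,P)+\varepsilon'$. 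Thus $U(f,P)$ and $L(f,P)$ are exactly the supremum and infimum of $S$ over all tag choices for the fixed $P$.

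For the implication Darboux $\Rightarrow$ Riemann, suppose $L(f)=U(f)=:I$. Then $L(f,P)\le I\le U(f,P)$ for every $P$ (since $I=\sup_Q L(f,Q)\ge L(f,P)$ and $I=\inf_Q U(f,Q)\le U(f,P)$), so both $S$ and $I$ lie in $[L(f,P),U(f,P)]$ and therefore $|S-I|\le U(f,P)-L(f,P)$ for every tagged partition. By the preceding $\mathcal{P}_{\delta}$-criterion, given $\varepsilon>0$ there is $\delta>0$ with $U(f,P)-L(f,P)<\varepsilon$ whenever $P\in\mathcal{P}_{\delta}(a,b)$; hence $|S-I|<\varepsilon$ for all such $P$ and all tag choices, which is precisely Riemann $\diamond_{\alpha}$-integrability with integral $I$.

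For the converse I would assume $f$ is Riemann $\diamond_{\alpha}$-integrable with number $I$, fix $\varepsilon>0$, and take the $\delta>0$ from the definition, so that $|S-I|<\varepsilon$ for every $P\in\mathcal{P}_{\delta}(a,b)$ and every tag choice. Choosing in such a $P$ tags with $S>U(f,P)-\varepsilon'$ forces $U(f,P)-\varepsilon'<I+\varepsilon$, and since $\varepsilon'$ is arbitrary, $U(f,P)\le I+\varepsilon$; symmetrically $L(f,P)\ge I-\varepsilon$. Consequently $U(f,P)-L(f,P)\le 2\varepsilon$, and as $\varepsilon$ is arbitrary (and $\mathcal{P}_{\delta}(a,b)$ is nonempty by Lemma~\ref{delta}), Theorem~\ref{epsilon} gives that $f$ is Darboux $\diamond_{\alpha}$-integrable. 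Finally, writing the common Darboux value as $\int_{a}^{b}f(t)\diamond_{\alpha}t$, the relations $L(f,P)\le\int_{a}^{b}f(t)\diamond_{\alpha}t\le U(f,P)$ together with $I-\varepsilon\le L(f,P)\le U(f,P)\le I+\varepsilon$ force $\left|\int_{a}^{b}f(t)\diamond_{\alpha}t-I\right|\le 2\varepsilon$; letting $\varepsilon\to0$ shows the two integrals coincide.

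I do not expect a genuine obstacle, since this is the classical Darboux--Riemann equivalence in disguise; the points needing care are bookkeeping rather than conceptual. Specifically, one must keep the two tag families $\overline{\xi}_i\in[t_{i-1},t_i)$ and $\underline{\xi}_i\in(t_{i-1},t_i]$ matched to the correspondingly one-sided quantities $\overline{M_i},\overline{m_i}$ and $\underline{M_i},\underline{m_i}$, and one must use the restriction to the special partitions $\mathcal{P}_{\delta}(a,b)$ consistently in both directions. The step most deserving of attention is the sharpness claim that $\sup_{\text{tags}}S=U(f,P)$ and $\inf_{\text{tags}}S=L(f,P)$, which relies on the half-open tag intervals being nonempty; this holds because each contains its included endpoint ($t_{i-1}\in[t_{i-1},t_i)$ and $t_i\in(t_{i-1},t_i]$), so the suprema and infima are genuinely attained in the limit by admissible tags.
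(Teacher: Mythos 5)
Your proposal is correct, and it is essentially the proof the paper has in mind: the paper omits the argument, remarking only that the theorem ``may be proved in much the same way as [Bohner--Guseinov, Theorem~5.11],'' and your write-up is exactly that classical Darboux--Riemann equivalence adapted to the diamond-$\alpha$ sums --- the sandwich $L(f,P)\le S\le U(f,P)$, the sharpness of the sandwich via near-optimal tags in the nonempty half-open intervals, the $\mathcal{P}_{\delta}$-criterion in the forward direction, and Theorem~\ref{epsilon} plus Lemma~\ref{delta} in the converse. No gaps; you have simply supplied the details the paper delegates to the reference.
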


We define
\begin{equation*}
\int_{a}^{a}f(t)\diamond_{\alpha}t=0
\end{equation*}
and
\begin{equation*}
    \int_{a}^{b}f(t)\diamond_{\alpha}t=-\int_{b}^{a}f(t)\diamond_{\alpha}t,\,
    \ a>b.
\end{equation*}

\begin{Cor}
Let $f$ be Riemann $\diamond_{\alpha}$-integrable from $a$ to $b$.

(i) If $\alpha =1$, then $f$ is Riemann $\Delta$-integrable from $a$
to $b$.

(ii) If $\alpha =0$, then $f$ is Riemann $\nabla$-integrable from
$a$ to $b$.

(iii) If $0< \alpha <1$, then $f$ is is Riemann $\Delta$-integrable
and Riemann $\nabla$-integrable from $a$ to $b$.
\end{Cor}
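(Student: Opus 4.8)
The plan is to reduce everything to results already established, chaining three implications. First I would invoke Theorem~\ref{integralRD}: since $f$ is assumed Riemann $\diamond_{\alpha}$-integrable, it is Darboux $\diamond_{\alpha}$-integrable, with the two integrals coinciding. This converts a hypothesis phrased in terms of Riemann sums into one phrased in terms of upper and lower Darboux sums, which is precisely the setting of Corollary~\ref{essential}.

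Next I would apply Corollary~\ref{essential} to the now Darboux $\diamond_{\alpha}$-integrable function $f$. Case by case this yields Darboux $\Delta$-integrability when $\alpha=1$; Darboux $\nabla$-integrability when $\alpha=0$; and both Darboux $\Delta$- and Darboux $\nabla$-integrability when $0<\alpha<1$. Thus the three cases (i)--(iii) of the present corollary will mirror exactly the three cases of Corollary~\ref{essential}, provided we can translate Darboux integrability back into Riemann integrability for the $\Delta$ and $\nabla$ integrals separately.

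The final step is this translation. For the $\Delta$ integral, the equivalence between Riemann and Darboux integrability is the classical result \cite[Theorem~5.11]{BG}---the same theorem whose proof technique was used to establish Theorem~\ref{integralRD}---and it gives that Darboux $\Delta$-integrability implies Riemann $\Delta$-integrability. For the $\nabla$ integral one needs the analogous equivalence, which is the dual statement: it is obtained by the symmetric argument, interchanging the roles of $\sigma$ and $\rho$, of left- and right-dense points, and of the two families of sampling points $\overline{\xi}_{i}\in[t_{i-1},t_{i})$ and $\underline{\xi}_{i}\in(t_{i-1},t_{i}]$. Applying the appropriate equivalence in each case then delivers Riemann $\Delta$- and/or Riemann $\nabla$-integrability, completing (i), (ii), and (iii).

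I expect no genuine difficulty here, since the corollary is essentially a bookkeeping consequence of Theorem~\ref{integralRD} and Corollary~\ref{essential}. The one point deserving a word of care is the $\nabla$ half of the last step, because \cite{BG} is written for the $\Delta$ calculus; but the nabla Riemann--Darboux equivalence follows from the same reasoning by the standard duality between the delta and nabla theories, so it may simply be cited as the dual of \cite[Theorem~5.11]{BG}.
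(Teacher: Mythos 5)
Your proposal is correct and follows exactly the route the paper intends: the corollary is stated without proof precisely because it is the chain Theorem~\ref{integralRD} (Riemann--Darboux equivalence for $\diamond_{\alpha}$), then Corollary~\ref{essential} (Darboux $\diamond_{\alpha}$-integrability yields Darboux $\Delta$- and/or $\nabla$-integrability according to $\alpha$), then the Riemann--Darboux equivalence of \cite[Theorem~5.11]{BG} and its nabla dual to return to the Riemann setting. Your explicit care with the nabla half of the last step is a sensible addition, but the argument is the same bookkeeping the paper relies on.
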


\begin{Th}
Let $f: \mathbb{T}\rightarrow \R$ and let $t\in \mathbb{T}$. Then,

(i) $f$ is integrable from $t$ to $\sigma(t)$ and
\begin{equation}\label{i}
\int_{t}^{\sigma(t)}f(s)\diamond_{\alpha}s=\mu(t)(\alpha
f(t)+(1-\alpha)f^{\sigma} (t)) \, ;
\end{equation}
(ii) $f$ is integrable from $\rho(t)$ to $t$ and
\begin{equation*}
\int_{\rho(t)}^{t}f(s)\diamond_{\alpha}s=\eta(t)(\alpha
f^{\rho}(t)+(1-\alpha)f(t)).
\end{equation*}
\end{Th}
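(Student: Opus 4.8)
The plan is to observe that the interval $[t,\sigma(t)]$ is small enough to admit essentially a single partition, to compute the upper and lower Darboux $\diamond_{\alpha}$-sums for it, and then to invoke Theorem~\ref{someP}. I would treat part~(i) in detail; part~(ii) is entirely analogous, working on $[\rho(t),t]$ instead.

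First I would dispose of the degenerate case. If $t$ is right-dense, then $\sigma(t)=t$ and $\mu(t)=0$, so by the convention $\int_{a}^{a}f(s)\diamond_{\alpha}s=0$ we have $\int_{t}^{\sigma(t)}f(s)\diamond_{\alpha}s=0$, while the right-hand side of \eqref{i} equals $\mu(t)(\alpha f(t)+(1-\alpha)f^{\sigma}(t))=0$ as well; the claim holds trivially. Now suppose $t$ is right-scattered, so that $\mu(t)=\sigma(t)-t>0$. The key structural fact is that no point of $\mathbb{T}$ lies strictly between $t$ and $\sigma(t)$; hence $[t,\sigma(t)]\cap\mathbb{T}=\{t,\sigma(t)\}$ and the only member of $\mathcal{P}(t,\sigma(t))$ is the partition $P=\{t_{0},t_{1}\}$ with $t_{0}=t$, $t_{1}=\sigma(t)$, and $n=1$.

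For this $P$ the half-open subintervals each collapse to a single point: $[t_{0},t_{1})\cap\mathbb{T}=\{t\}$ and $(t_{0},t_{1}]\cap\mathbb{T}=\{\sigma(t)\}$. Consequently the suprema and infima defining the Darboux data reduce to plain function values, namely $\overline{M_{1}}=\overline{m_{1}}=f(t)$ and $\underline{M_{1}}=\underline{m_{1}}=f^{\sigma}(t)$. Substituting into the definitions of $U(f,P)$ and $L(f,P)$ then gives
\[
U(f,P)=L(f,P)=\bigl(\alpha f(t)+(1-\alpha)f^{\sigma}(t)\bigr)\mu(t).
\]
Since $U(f,P)=L(f,P)$ for this single partition, Theorem~\ref{someP} immediately yields that $f$ is $\diamond_{\alpha}$-integrable from $t$ to $\sigma(t)$ with integral equal to the common value $U(f,P)=L(f,P)$, which is precisely \eqref{i}.

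For part~(ii) I would run the same argument on $[\rho(t),t]$: the left-dense case again makes both sides vanish through $\eta(t)=0$, and in the left-scattered case the unique partition $\{\rho(t),t\}$ produces $\overline{M_{1}}=\overline{m_{1}}=f^{\rho}(t)$ and $\underline{M_{1}}=\underline{m_{1}}=f(t)$, whence $U(f,P)=L(f,P)=\eta(t)(\alpha f^{\rho}(t)+(1-\alpha)f(t))$, and Theorem~\ref{someP} finishes the proof. I do not anticipate any genuine difficulty here; the only step requiring care is the identification of the one-point half-open subintervals $[t,\sigma(t))$ and $(t,\sigma(t)]$, on which the collapse of the suprema and infima—and hence the entire computation—hinges.
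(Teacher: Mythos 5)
Your proof is correct and follows essentially the same route as the paper: handle the right-dense case trivially via $\mu(t)=0$, observe that for right-scattered $t$ the only partition of $[t,\sigma(t)]$ is $\{t,\sigma(t)\}$, note that $[t,\sigma(t))$ and $(t,\sigma(t)]$ collapse to the single points $t$ and $\sigma(t)$ so that $U(f,P)=L(f,P)=(\alpha f(t)+(1-\alpha)f^{\sigma}(t))\mu(t)$, and conclude by Theorem~\ref{someP}. The treatment of part~(ii) by symmetry also matches the paper.
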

\begin{proof}
(i) If $\sigma(t)=t$, then $\mu(t)=0$ and equality \eqref{i} is
obvious. If $\sigma(t)>t$, then $\mathcal{P}(t,\sigma(t))$ contains
only one element given by $t=s_{0}<s_{1}=\sigma(t)$. Since
$[s_{0},s_{1})={t}$ and $(s_{0},s_{1}]={\sigma(t)}$, we have
$U(f,P)=\alpha
f(t)(\sigma(t)-t)+(1-\alpha)f^{\sigma}(t)(\sigma(t)-t)=L(f,P)$. By Theorem~\ref{someP}, $f$ is $\diamond_{\alpha}$-integrable from $t$
to $\sigma(t)$ and \eqref{i} holds. Proof of (ii)
is done in a similar way.
\end{proof}

\begin{Cor}
Let $a, b\in \mathbb{T}$ and $a<b$. Then we have the following:

(i) If $\mathbb{T}= \R$, then a bounded function $f$ on $[a,b]$ is
$\diamond_{\alpha}$-integrable from $a$ to $b$  if and only if is
Riemann integrable on $[a,b]$ in the classical sense, and in this
case $\int_{a}^{b}f(t)\diamond_{\alpha}t=\int_{a}^{b}f(t)dt$.

(ii) If $\mathbb{T}= \mathbb{Z}$, then every function $f$ defined on
$\mathbb{Z}$ is $\diamond_{\alpha}$-integrable from $a$ to $b$, and
$\int_{a}^{b}f(t)\diamond_{\alpha}t=\sum_{t=a+1}^{b-1}f(t)+\alpha
f(a)+(1-\alpha)f(b)$.

(iii) If $\mathbb{T}= h\mathbb{Z}$, then then every function $f$
defined on $h\mathbb{Z}$ is $\diamond_{\alpha}$-integrable from $a$
to $b$, and
$\int_{a}^{b}f(t)\diamond_{\alpha}t=\sum_{k=\frac{a}{h}+1}^{\frac{b}{h}-1}f(kh)h+\alpha
f(a)h+(1-\alpha)f(b)h$.
\end{Cor}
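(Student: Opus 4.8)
The plan is to reduce the whole statement to the already--understood $\Delta$- and $\nabla$-integrals, whose values on these three concrete time scales are classical. Two bridges are available: the first Corollary of this subsection asserts that $\Delta$-integrability together with $\nabla$-integrability implies $\diamond_{\alpha}$-integrability with the linear-combination value \eqref{eq:DAI:JF}, while Corollary~\ref{essential} supplies the converse, splitting according to whether $\alpha$ equals $0$, equals $1$, or lies strictly between. Thus for each of $\mathbb{T}=\R$, $\mathbb{T}=\mathbb{Z}$, and $\mathbb{T}=h\mathbb{Z}$ it suffices to identify $\int_a^b f\,\Delta t$ and $\int_a^b f\,\nabla t$ and then to combine them.

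For (i), on $\mathbb{T}=\R$ every point is both left- and right-dense, so the $\Delta$- and $\nabla$-integrals each coincide with the ordinary Riemann integral; in particular a bounded $f$ is $\Delta$-integrable exactly when it is $\nabla$-integrable, exactly when it is Riemann integrable, and then $\int_a^b f\,\Delta t=\int_a^b f\,\nabla t=\int_a^b f(t)\,dt$. If $f$ is Riemann integrable, the first Corollary gives $\diamond_{\alpha}$-integrability with value $\alpha\int_a^b f(t)\,dt+(1-\alpha)\int_a^b f(t)\,dt=\int_a^b f(t)\,dt$. Conversely, if $f$ is $\diamond_{\alpha}$-integrable, then Corollary~\ref{essential} forces $f$ to be $\Delta$-integrable (for $\alpha\neq 0$) or $\nabla$-integrable (for $\alpha=0$), hence Riemann integrable. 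This yields both the equivalence and the stated value.

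For (ii) and (iii) the underlying set is purely discrete, so every bounded function is trivially both $\Delta$- and $\nabla$-integrable, the relevant integrals being finite sums; the first Corollary then gives $\diamond_{\alpha}$-integrability at once. It remains to insert the standard discrete formulas. On $\mathbb{Z}$ one has $\int_a^b f\,\Delta t=\sum_{t=a}^{b-1}f(t)$ and $\int_a^b f\,\nabla t=\sum_{t=a+1}^{b}f(t)$; substituting into \eqref{eq:DAI:JF} and collecting, each interior value $f(a+1),\ldots,f(b-1)$ acquires coefficient $\alpha+(1-\alpha)=1$, while $f(a)$ survives only from the $\Delta$-sum with weight $\alpha$ and $f(b)$ only from the $\nabla$-sum with weight $1-\alpha$, which is exactly the claimed expression. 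Part (iii) is the same computation after rescaling: on $h\mathbb{Z}$ the $\Delta$- and $\nabla$-integrals carry a factor $h$ and are indexed by $k=t/h$, and the identical cancellation of interior terms produces the stated formula.

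The only genuinely delicate point is the equivalence in (i), since the $\diamond_{\alpha}$-Darboux sums are built from the one-sided suprema $\overline{M_i},\underline{M_i}$ over half-open subintervals rather than from the two-sided suprema of the classical theory. Routing the argument through the $\Delta$- and $\nabla$-integrals avoids this difficulty, because the identification of those integrals with the classical Riemann integral on $\R$ is standard. Arguing instead straight from the Darboux definition would require checking that partition refinement washes out the endpoint discrepancies between $\overline{M_i}$, $\underline{M_i}$ and the classical $M_i=\sup\{f(t):t\in[t_{i-1},t_i]\}$---true, but precisely the content already encapsulated in that known $\Delta/\nabla$ equivalence.
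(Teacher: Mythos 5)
Your proposal is correct and follows exactly the route the paper intends: the Corollary is stated without an explicit proof precisely because it is meant to follow from the reduction results you invoke (the first Corollary giving $\diamond_{\alpha}$-integrability with value \eqref{eq:DAI:JF} from $\Delta$- and $\nabla$-integrability, Corollary~\ref{essential} for the converse split on $\alpha$) combined with the standard identifications of the delta and nabla integrals on $\R$, $\mathbb{Z}$, and $h\mathbb{Z}$. The case analysis in (i) and the bookkeeping of endpoint weights in (ii)--(iii) are both handled correctly.
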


The following results are straightforward consequences of Theorems~\ref{Riemann} and \ref{integralRD} and properties of the
Riemann delta (nabla) integral:

\begin{itemize}
\item[1.] Let $a, b\in \mathbb{T}$ and $a<b$. Every constant function
$f:\mathbb{T}\rightarrow \R$ is $\diamond_{\alpha}$-integrable from
$a$ to $b$ and $\int_{a}^{b}f(t)\diamond_{\alpha}t=c(b-a)$.

\item[2.] Every monotone function $f:\mathbb{T}\rightarrow \R$ on $[a,b]$ is
$\diamond_{\alpha}$-integrable from $a$ to $b$.

\item[3.] Every continuous function $f:\mathbb{T}\rightarrow \R$ on $[a,b]$ is
$\diamond_{\alpha}$-integrable from $a$ to $b$.

\item[4.] Every bounded function $f:\mathbb{T}\rightarrow \R$ on $[a,b]$ with
only finitely many discontinuity points is
$\diamond_{\alpha}$-integrable from $a$ to $b$.

\item[5.] Every regulated  function $f:\mathbb{T}\rightarrow \R$ on $[a,b]$ is
$\diamond_{\alpha}$-integrable from $a$ to $b$.

\item[6.] Let $f:\mathbb{T}\rightarrow \R$
be a bounded function on $[a,b]$
that is $\diamond_{\alpha}$-integrable from $a$ to $b$. Then, $f$ is
$\diamond_{\alpha}$-integrable on every subinterval $[c,d]$ of $[a,b]$.

\item[7.]Let $f,g$ be $\diamond_{\alpha}$-integrable from $a$ to $b$ and
$c\in \R$, $d\in\mathbb{T}$, $a<d<b$. Then:\\
(i) $cf$ is $\diamond_{\alpha}$-integrable from $a$ to $b$ and
$\int_{a}^{b}(cf)(t)\diamond_{\alpha}t=c\int_{a}^{b}f(t)\diamond_{\alpha}t$,\\
(ii) $f+g$ is $\diamond_{\alpha}$-integrable from $a$ to $b$ and
$\int_{a}^{b}(f+g)(t)\diamond_{\alpha}t
=\int_{a}^{b}f(t){\diamond_{\alpha}t}+\int_{a}^{b}g(t)\diamond_{\alpha}t$,\\
(iii) $fg$ is $\diamond_{\alpha}$-integrable from $a$ to $b$,\\
(iv)
$\int_{a}^{b}f(t)\diamond_{\alpha}t=\int_{a}^{d}f(t)\diamond_{\alpha}t+\int_{d}^{b}f(t)\diamond_{\alpha}t$.

\item[8.] If $f,g$ are $\diamond_{\alpha}$-integrable from $a$ to $b$ and if
$f(t)\leq g(t)$ for all $t\in [a,b]$, then
$\int_{a}^{b}f(t)\diamond_{\alpha}t\leq\int_{a}^{b}f(t)\diamond_{\alpha}t$.

\item[9.] If $f$ is $\diamond_{\alpha}$-integrable from $a$ to $b$, then so
is $|f|$. Moreover, $|\int_{a}^{b}f(t)\diamond_{\alpha}t|\leq
\int_{a}^{b}|f(t)|\diamond_{\alpha}t$.
\end{itemize}


\subsection{Improper integrals}
\label{subsec:II}

Let $\mathbb{T}$ be a time scale, $a \in
 \mathbb{T}$. Throughout this section we assume that there exists a
subset
\begin{equation*}
\{t_{k}:k\in \N_{0}\}\subset \mathbb{T}\, , \quad
a=t<t_{1}<t_{2}<\cdots\, , \quad \lim_{k\rightarrow\infty}t_{k}=\infty.
\end{equation*}
Let us suppose that the real-valued function $f$ is defined on $[a,\infty)$
and is Riemann $\diamond_{\alpha}$-integrable from $a$ to any point
$A\in \mathbb{T}$ with $A\geq a$. If the integral
\begin{equation*}
    F(A)=\int_{a}^{A}f(t)\diamond_{\alpha}t
\end{equation*}
approaches a finite limit as $A\rightarrow \infty$, we call that
limit the improper diamond-${\alpha}$ integral of first kind of $f$
from $a$ to $\infty$ and write
\begin{equation}\label{impint}
\int_{a}^{\infty}f(t)\diamond_{\alpha}t
=\lim_{A\rightarrow\infty}\left(\int_{a}^{A}f(t)\diamond_{\alpha}t\right) .
\end{equation}
In such case we say that the improper integral
$\int_{a}^{\infty}f(t)\diamond_{\alpha}t$ exists or that it is
convergent. If the limit \eqref{impint} does not exist, we say that
the improper integral does not exist or is divergent. Note that
\begin{equation*}
\int_{a}^{\infty}f(t)\diamond_{\alpha}t=\lim_{A\rightarrow\infty}
\left(\alpha
\int_{a}^{A}f(t)\triangle t+(1-\alpha)\int_{a}^{A}f(t)\nabla t\right) \, .
\end{equation*}

\begin{Cor}
If improper integrals $\int_{a}^{\infty}f(t)\triangle t$ and
$\int_{a}^{\infty}f(t)\nabla t$ exist, then
\begin{equation*}
\int_{a}^{\infty}f(t)\diamond_{\alpha}t=\alpha
\int_{a}^{\infty}f(t)\triangle
t+(1-\alpha)\int_{a}^{\infty}f(t)\nabla t.
\end{equation*}
\end{Cor}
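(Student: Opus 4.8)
The plan is to prove this corollary directly from the definition of the improper diamond-$\alpha$ integral as a limit, together with the linearity decomposition already established for the finite integral. The statement asserts that if the two improper integrals $\int_a^\infty f(t)\,\triangle t$ and $\int_a^\infty f(t)\,\nabla t$ both exist, then the improper $\diamond_\alpha$ integral exists and decomposes linearly. The natural route is to reduce everything to limits of finite integrals and invoke the algebra of limits.

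First I would recall that by definition \eqref{impint}, the improper diamond-$\alpha$ integral is
\begin{equation*}
\int_{a}^{\infty}f(t)\diamond_{\alpha}t
=\lim_{A\rightarrow\infty}F(A), \qquad F(A)=\int_{a}^{A}f(t)\diamond_{\alpha}t,
\end{equation*}
and that for each finite $A\geq a$ the function $f$ is Riemann $\diamond_\alpha$-integrable from $a$ to $A$ by hypothesis. The second remark in the excerpt already records that this finite integral splits as $F(A)=\alpha\int_a^A f(t)\triangle t+(1-\alpha)\int_a^A f(t)\nabla t$; this identity is exactly the content of Theorem~\ref{Riemann} (via the equivalence of Riemann and Darboux integrals, Theorem~\ref{integralRD}) applied on each $[a,A]$. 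So the key step is merely to substitute this decomposition into the limit.

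Next I would pass to the limit as $A\to\infty$. Denote by $I_\Delta=\int_a^\infty f(t)\triangle t$ and $I_\nabla=\int_a^\infty f(t)\nabla t$ the two improper integrals, which exist and are finite by hypothesis; that is, $\lim_{A\to\infty}\int_a^A f(t)\triangle t=I_\Delta$ and $\lim_{A\to\infty}\int_a^A f(t)\nabla t=I_\nabla$. Since $F(A)=\alpha\int_a^A f(t)\triangle t+(1-\alpha)\int_a^A f(t)\nabla t$ is a fixed linear combination of two convergent quantities, the limit of the sum equals the sum of the limits, so
\begin{equation*}
\lim_{A\to\infty}F(A)=\alpha I_\Delta+(1-\alpha)I_\nabla .
\end{equation*}
In particular the limit on the left exists and is finite, which by definition means the improper $\diamond_\alpha$ integral is convergent and equals the claimed value.

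I do not expect any genuine obstacle here: the proof is essentially an application of the linearity of limits to the already-known finite decomposition, and there is no interchange of limits or uniformity issue to worry about, since $\alpha$ and $1-\alpha$ are constants independent of $A$. The only point requiring a word of care is confirming that the improper $\diamond_\alpha$ integral's \emph{existence} (not just its value) follows—but this is automatic, because the right-hand side $\alpha I_\Delta+(1-\alpha)I_\nabla$ is a finite real number, so the limit defining the left-hand side genuinely exists. This is the one-line verification I would make explicit to close the argument.
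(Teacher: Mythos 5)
Your proof is correct and follows essentially the same route as the paper: the paper's own justification is the remark immediately preceding the corollary, which rewrites the improper $\diamond_{\alpha}$ integral as $\lim_{A\rightarrow\infty}\bigl(\alpha\int_{a}^{A}f(t)\triangle t+(1-\alpha)\int_{a}^{A}f(t)\nabla t\bigr)$ using the finite-interval decomposition, after which the conclusion is exactly your application of the algebra of limits to a fixed linear combination of convergent quantities. Nothing is missing; your explicit note that existence of the limit (not just its value) follows automatically is the same observation the paper leaves implicit.
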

On the other hand, the improper
diamond-${\alpha}$ integral may exist even if improper delta and
nabla integrals do not exist.
\begin{Example}
Consider the function
\begin{gather*}
f(t)=
\begin{cases}
1 & \text{ if } t=2l \\
-1 & \text{ if } t=2l+1,
\end{cases}
\quad l\in \mathbb{T} ,
\end{gather*}
on the time scale $\mathbb{T}= \N\cup\{0\}$. We have
\begin{equation*}
\int_{0}^{\infty}f(t)\triangle t=\sum_{k=0}^{\infty}f(k)\, , \quad
\int_{0}^{\infty}f(t)\nabla t=\sum_{k=0}^{\infty}f(k+1).
\end{equation*}
Hence, the improper integrals $\int_{0}^{\infty}f(t)\triangle t$ and
$\int_{0}^{\infty}f(t)\nabla t$ do not exist. On the other hand, for
$\alpha=1/2$ we have
\begin{equation*}
\int_{a}^{A}f(t)\diamond_{\alpha}t=\frac{1}{2}\left(\sum_{k=0}^{A-1}f(k)+
\sum_{k=0}^{A-1}f(k+1)\right)=\frac{1}{2}\sum_{k=0}^{A-1}(f(k)+f(k+1))=0
\end{equation*}
and
\begin{equation*}
\int_{a}^{\infty}f(t)\diamond_{\alpha}t
=\lim_{A\rightarrow\infty}\left(\int_{a}^{A}f(t)\diamond_{\alpha}t\right)=0.
\end{equation*}
\end{Example}


\subsection{Diamond-alpha Fermat's and Mean Value Theorems}
\label{subsec:MVT}

Theorem~\ref{MVTin1}
and Theorem~\ref{MVTin2} are exact analogies of mean value theorems for delta (nabla) integrals and there are no differences in proofs of these theorems. However,
the formulation of the mean value theorems~\ref{thm:mvt1} and \ref{thm:mvt2} below, for the diamond-${\alpha}$
derivative, provide a generalization more similar to the classical results than the ones previously proved for the delta or nabla derivatives (\textrm{cf.} \cite{BGd}).

\begin{Th}\label{MVTin1}
Let $f$ and $g$ be bounded and $\diamond_{\alpha}$-integrable
functions from $a$ to $b$, and let $g$ be nonnegative (or
nonpositive) on $[a,b]$. Let $m$ and $M$ be the infimum and
supremum, respectively, of the function $f$ on $[a,b]$. Then, there
exists a real number $K$ satisfying the inequalities $m\leq K \leq
M$ such that
\begin{equation*}
\int_{a}^{b}f(t)g(t)\diamond_{\alpha}t=K
\int_{a}^{b}g(t)\diamond_{\alpha}t.
\end{equation*}
\end{Th}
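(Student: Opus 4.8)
The plan is to reduce the statement to the classical first mean value theorem for integrals by exploiting the linearity and order properties of the $\diamond_\alpha$-integral already catalogued as items 1--9. First I would dispose of the sign convention: it suffices to prove the result for $g\geq 0$, since the nonpositive case follows at once by applying that case to $-g$ and invoking homogeneity (item 7(i)). So assume $g(t)\geq 0$ for all $t\in[a,b]$.

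Because $m=\inf f$ and $M=\sup f$ on $[a,b]$, we have $m\leq f(t)\leq M$ for every $t\in[a,b]$, and multiplying through by $g(t)\geq 0$ yields the pointwise bounds $mg(t)\leq f(t)g(t)\leq Mg(t)$. Here the product $fg$ is $\diamond_\alpha$-integrable by item 7(iii), while the constant multiples $mg$ and $Mg$ are integrable by item 7(i). Applying the comparison property (item 8) to these bounds, together with homogeneity, then gives
\begin{equation*}
m\int_{a}^{b}g(t)\diamond_{\alpha}t \leq \int_{a}^{b}f(t)g(t)\diamond_{\alpha}t \leq M\int_{a}^{b}g(t)\diamond_{\alpha}t.
\end{equation*}

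Next I would split into two cases according to the value of $\int_{a}^{b}g(t)\diamond_{\alpha}t$, which is nonnegative since $g\geq 0$. If $\int_{a}^{b}g(t)\diamond_{\alpha}t>0$, I divide the displayed inequalities through by this positive quantity and set
\begin{equation*}
K=\frac{\int_{a}^{b}f(t)g(t)\diamond_{\alpha}t}{\int_{a}^{b}g(t)\diamond_{\alpha}t};
\end{equation*}
then $m\leq K\leq M$ and the asserted identity holds by construction. If instead $\int_{a}^{b}g(t)\diamond_{\alpha}t=0$, the sandwiching inequalities force $\int_{a}^{b}f(t)g(t)\diamond_{\alpha}t=0$ as well, so the identity holds for \emph{any} admissible $K$, and I simply take $K=m$.

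The only point demanding care is this degenerate case $\int_{a}^{b}g(t)\diamond_{\alpha}t=0$: one must notice that $g\geq 0$ rules out a negative value, so the integral is either strictly positive (the generic situation, handled by the quotient) or zero (in which case both sides of the claimed equality vanish and no division is performed). Beyond this bookkeeping no genuine obstacle arises, since every linearity and order property of the $\diamond_\alpha$-integral needed above is supplied by items 7 and 8.
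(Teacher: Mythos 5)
Your proof is correct, and it is essentially the argument the paper itself relies on: the paper gives no separate proof, stating instead that Theorem~\ref{MVTin1} is proved exactly as the classical delta/nabla integral mean value theorem, which is precisely your sandwich argument ($mg\leq fg\leq Mg$, integrate via the comparison and linearity properties, then divide by $\int_a^b g\,\diamond_\alpha t$ or handle the degenerate zero case). Your explicit treatment of the case $\int_a^b g(t)\diamond_{\alpha}t=0$ and the reduction of the nonpositive case to the nonnegative one are exactly the standard bookkeeping steps implicit in the paper's reference.
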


\begin{Th}\label{MVTin2}
Let $f$  be bounded and $\diamond_{\alpha}$-integrable function on
$[a,b]$. Let $m$ and $M$ be the infimum and supremum, respectively,
of
the function  $F(s)=\int_{a}^{t}f(s)\diamond_{\alpha}s$ on $[a,b]$. We have: \\
(i) If a function $g$ is non-increasing with $g(t)\geq 0$ on
$[a,b]$, then there is a number $K$ such that $m\leq K \leq M$ and
\begin{equation*}
\int_{a}^{b}f(t)g(t)\diamond_{\alpha}t=Kg(a).
\end{equation*}
(ii) If $g$ is any monotonic function on $[a,b]$, then there is a
number $K$ such that $m\leq K \leq M$ and
\begin{equation*}
\int_{a}^{b}f(t)g(t)\diamond_{\alpha}t=[g(a)-g(b)]K+g(b)\int_{a}^{b}f(t)\diamond_{\alpha}t.
\end{equation*}
\end{Th}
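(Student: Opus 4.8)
The plan is to prove part (i) directly and then obtain part (ii) from it by a shift. Throughout set $F(t)=\int_{a}^{t}f(s)\diamond_{\alpha}s$, which is well defined on $[a,b]$ by the subinterval property (item~6 of the list), so that $m=\inf_{[a,b]}F$ and $M=\sup_{[a,b]}F$ satisfy $m\le F(t)\le M$ for every $t$. A key structural remark is that the statement only asserts the existence of a number $K\in[m,M]$, not of a point at which $F$ equals $K$; hence no intermediate value property for $F$ on $\mathbb{T}$ is required, which is exactly what makes the conclusion robust on a scattered time scale.

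For part (i) I would argue by summation by parts (Abel's transformation) on Riemann $\diamond_{\alpha}$-sums. Fix $\varepsilon>0$. Since $fg$ is $\diamond_{\alpha}$-integrable (item~7(iii)), choose $\delta>0$ and $P\in\mathcal{P}_{\delta}(a,b)$, $a=t_{0}<\cdots<t_{n}=b$, so fine that the Riemann $\diamond_{\alpha}$-sum of $fg$ is within $\varepsilon$ of $\int_{a}^{b}fg\diamond_{\alpha}t$ and, simultaneously, the partial sums $\sigma_{j}:=\sum_{i=1}^{j}\bigl(\alpha f(\overline{\xi}_{i})+(1-\alpha)f(\underline{\xi}_{i})\bigr)(t_{i}-t_{i-1})$ approximate $F(t_{j})$ to within $\varepsilon$ for every $j$. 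Writing the $\diamond_{\alpha}$-sum of $fg$ with $g$ frozen at the nodes and invoking Abel's identity, I would recast it as $g(t_{n-1})\sigma_{n}+\sum_{i=1}^{n-1}\bigl(g(t_{i-1})-g(t_{i})\bigr)\sigma_{i}$. Because $g$ is non-increasing and non-negative, every coefficient $g(t_{i-1})-g(t_{i})$ and $g(t_{n-1})$ is $\ge 0$, and they telescope to $g(t_{0})=g(a)$. Replacing each $\sigma_{i}$ by the bounds $m-\varepsilon\le\sigma_{i}\le M+\varepsilon$ then squeezes the sum between $(m-\varepsilon)g(a)$ and $(M+\varepsilon)g(a)$; letting $\varepsilon\to 0$ gives $m\,g(a)\le\int_{a}^{b}fg\diamond_{\alpha}t\le M\,g(a)$, and dividing by $g(a)$ produces the required $K$ (the case $g(a)=0$ forces $g\equiv 0$, and then both sides vanish for any $K\in[m,M]$).

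Part (ii) I would deduce from (i) by a translation of $g$. If $g$ is non-increasing, apply (i) to $h:=g-g(b)$, which is non-increasing and non-negative on $[a,b]$; then $\int_{a}^{b}f(t)\bigl(g(t)-g(b)\bigr)\diamond_{\alpha}t=K\,h(a)=K\bigl(g(a)-g(b)\bigr)$ with $K\in[m,M]$, and linearity (items~7(i)--(ii)) rearranges this into the stated identity. If $g$ is non-decreasing, apply (i) instead to $h:=g(b)-g$, which is again non-increasing and non-negative, and rearrange in the same way; both monotone cases collapse to $\int_{a}^{b}f(t)g(t)\diamond_{\alpha}t=[g(a)-g(b)]K+g(b)\int_{a}^{b}f(t)\diamond_{\alpha}t$.

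The main obstacle is the Abel step, precisely the claim that the $\diamond_{\alpha}$-sum of $fg$ may be replaced, up to an error tending to $0$, by the sum with $g$ frozen at the partition nodes. On the small intervals of a $\mathcal{P}_{\delta}$-partition this error is controlled by $\delta$ times the total variation $|g(a)-g(b)|$ of the monotone $g$; on the big intervals (where $\rho(t_{i})=t_{i-1}$) the sample points are forced to the endpoints and the contributions must be matched exactly against the one-step formula $\int_{\rho(t)}^{t}f(s)\diamond_{\alpha}s=\eta(t)\bigl(\alpha f^{\rho}(t)+(1-\alpha)f(t)\bigr)$. Keeping the delta-weighted ($\alpha$) and nabla-weighted ($1-\alpha$) halves of the $\diamond_{\alpha}$-sum aligned through this transformation is what makes the bookkeeping delicate. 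Once it is done, the crucial feature — that the single constant $K$ lands in $[m,M]$ for the combined antiderivative $F$, rather than only separately for the delta and nabla antiderivatives — comes for free, since the partial sums $\sigma_{j}$ are extracted from the $\diamond_{\alpha}$-sum itself.
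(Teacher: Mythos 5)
Your reduction of part~(ii) to part~(i) is sound, and you have correctly isolated the crux of part~(i) in the Abel/freezing step --- but what you defer as ``delicate bookkeeping'' is not a technicality: it is exactly where the argument breaks, and it cannot be repaired. On a big interval (where $\rho(t_{i})=t_{i-1}$) the $\alpha$-weighted sample is forced to be $\overline{\xi}_{i}=t_{i-1}$ while the $(1-\alpha)$-weighted sample is forced to be $\underline{\xi}_{i}=t_{i}$. Freezing $g$ at the left node therefore leaves the error $(1-\alpha)f(t_{i})\bigl(g(t_{i})-g(t_{i-1})\bigr)(t_{i}-t_{i-1})$, and freezing at the right node leaves the symmetric $\alpha$-error; either way the error carries the factor $t_{i}-t_{i-1}$, which on big intervals is not controlled by $\delta$. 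The only way to kill both errors is to run Abel's transformation separately on the delta half (frozen at left nodes) and on the nabla half (frozen at right nodes); but then the relevant partial sums approximate the \emph{delta} and \emph{nabla} antiderivatives $F_{\Delta}(t)=\int_{a}^{t}f(s)\Delta s$ and $F_{\nabla}(t)=\int_{a}^{t}f(s)\nabla s$ separately, and the best you can conclude is $K\in[\alpha m_{\Delta}+(1-\alpha)m_{\nabla},\,\alpha M_{\Delta}+(1-\alpha)M_{\nabla}]$, an interval that contains $[m,M]$ and is in general strictly larger. Your hoped-for conclusion --- that $K$ lands in $[m,M]$ for the combined $F$ ``for free'' --- is precisely what fails.

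In fact the statement you are trying to prove is false for every $\alpha\in(0,1)$, so no proof exists. (Note the paper offers no proof either: it asserts the delta/nabla proofs transfer verbatim, and this counterexample shows that assertion is unwarranted.) Take $\mathbb{T}_{0}=\mathbb{Z}$, $a=0$, $b=1$, so $\mathbb{T}=\{0,1\}$, and let $f(0)=1$, $f(1)=-1$, $g(0)=1$, $g(1)=0$; then $g$ is non-increasing and non-negative, and the only partition of $[0,1]$ is $\{0,1\}$, whence
\begin{equation*}
\int_{0}^{1}f(t)\diamond_{\alpha}t=\alpha f(0)+(1-\alpha)f(1)=2\alpha-1,
\qquad
\int_{0}^{1}f(t)g(t)\diamond_{\alpha}t=\alpha f(0)g(0)+(1-\alpha)f(1)g(1)=\alpha .
\end{equation*}
For $\alpha=\tfrac{1}{2}$ this gives $F(0)=F(1)=0$, hence $m=M=0$, yet part~(i) would require $\tfrac{1}{2}=\int_{0}^{1}f(t)g(t)\diamond_{\alpha}t=Kg(0)=K$ with $K\in\{0\}$, a contradiction; part~(ii) fails on the same data, since with $g(1)=0$ it reduces to the same equation. (For general $\alpha\in(0,1)$ one checks $\alpha\notin[\min(0,2\alpha-1),\max(0,2\alpha-1)]$.) By contrast, for $\alpha=1$ or $\alpha=0$ your argument does close up, because only one family of forced samples occurs; this is exactly why the delta and nabla theorems are true while their $\diamond_{\alpha}$-combination, stated with $m=\inf F$ and $M=\sup F$ for $F=\alpha F_{\Delta}+(1-\alpha)F_{\nabla}$, is not: the infimum of a sum can strictly exceed the sum of the infima. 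The correct statement your method actually proves is the weaker one with $[m,M]$ replaced by $[\alpha m_{\Delta}+(1-\alpha)m_{\nabla},\,\alpha M_{\Delta}+(1-\alpha)M_{\nabla}]$.
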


We now define a local maximum of a real function defined on a time
scale $\mathbb{T}$. The definition of a local minimum is done in a
similar way.

\begin{df}
We say that a function $f:\mathbb{T}\rightarrow \R$ assumes its
local maximum at $t_{0}\in\mathbb{T}_{k}^{k}$ provided \\
(i) if $t_{0}$ is scattered,  then $f(\sigma (t_{0}))\leq f(t_{0})$
and $f(\rho(t_{0}))\leq f(t_{0})$; \\
(ii) if $t_{0}$ is dense, then there is a neighborhood $U$ of
$t_{0}$ such that $f(t)\leq f(t_{0})$ for all $t\in U$;\\
(iii) if $t_{0}$ is left-scattered and right-dense, then
$f(\rho(t_{0}))\leq f(t_{0})$ and there is a neighborhood $U$ of
$t_{0}$ such that $f(t)\leq f(t_{0})$ for all $t\in U$ with $t>t_{0}$;\\
(iv) if $t_{0}$ is right-scattered and left-dense, then $f(\sigma
(t_{0}))\leq f(t_{0})$
 and there is a neighborhood $U$ of
$t_{0}$ such that $f(t)\leq f(t_{0})$ for all $t\in U$ with
$t<t_{0}$.
\end{df}

Theorem~\ref{thm:ft} permits to introduce the notion
of critical point in a similar way as done in classical calculus. We remark that equality $f^{\diamond_{\alpha}}(t_{0})=0$ in Theorem~\ref{thm:ft} does not always hold for the delta ($\alpha = 1$) or nabla ($\alpha = 0$) cases (\textrm{cf.} \cite{BGd}).

\begin{Th}[diamond-alpha Fermat's theorem for stationary points]
\label{thm:ft}
Suppose f assumes its local extremum at $t_{0}\in\mathbb{T}_{k}^{k}$
and $f$ is delta and nabla differentiable at $t_{0}$. Then, there exists $\alpha\in [0,1]$ such that $f^{\diamond_{\alpha}}(t_{0})=0$.
\end{Th}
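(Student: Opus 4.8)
The plan is to reduce the statement to a sign comparison between the two one‑sided dynamic derivatives at $t_0$, after which the existence of $\alpha$ becomes a one‑line linear solve. By the definition \eqref{eq:defSmp}, finding $\alpha\in[0,1]$ with $f^{\diamond_{\alpha}}(t_{0})=0$ amounts to solving the convex‑combination equation
\[
\alpha\, f^{\Delta}(t_{0}) + (1-\alpha)\, f^{\nabla}(t_{0}) = 0 .
\]
Such an $\alpha$ exists in $[0,1]$ exactly when $0$ lies between $f^{\Delta}(t_{0})$ and $f^{\nabla}(t_{0})$, i.e.\ when $f^{\Delta}(t_{0})\,f^{\nabla}(t_{0})\le 0$. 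So the entire proof comes down to the claim that at a local extremum the delta and nabla derivatives have opposite signs (one possibly zero).

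First I would treat the local maximum; the local minimum case follows immediately by applying the result to $-f$ and using $(-f)^{\diamond_{\alpha}}=-f^{\diamond_{\alpha}}$. The key claim is that at a local maximum $t_{0}$ one always has $f^{\Delta}(t_{0})\le 0\le f^{\nabla}(t_{0})$, and I would establish this by running through the four cases of the definition of local maximum. When $t_{0}$ is right‑scattered, the difference‑quotient form $f^{\Delta}(t_{0})=\bigl(f(\sigma(t_{0}))-f(t_{0})\bigr)/\mu(t_{0})$ is $\le 0$ directly from $f(\sigma(t_{0}))\le f(t_{0})$ and $\mu(t_{0})>0$; symmetrically, when $t_{0}$ is left‑scattered, $f^{\nabla}(t_{0})=\bigl(f(t_{0})-f(\rho(t_{0}))\bigr)/\eta(t_{0})\ge 0$ from $f(\rho(t_{0}))\le f(t_{0})$.

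The dense directions require a one‑sided limit argument. If $t_{0}$ is right‑dense then $\sigma(t_{0})=t_{0}$, and the delta‑derivative definition collapses to $f^{\Delta}(t_{0})=\lim_{s\to t_{0}}\bigl(f(t_{0})-f(s)\bigr)/(t_{0}-s)$ over $s\in\mathbb{T}$ near $t_{0}$. Choosing the neighborhood $U=(t_{0}-\delta,t_{0}+\delta)\cap\mathbb{T}$ with $\delta<\eta(t_{0})$ removes every point of $\mathbb{T}$ strictly to the left of $t_{0}$, so the limit is genuinely one‑sided from the right, where $f(s)\le f(t_{0})$ forces $f^{\Delta}(t_{0})\le 0$; an identical argument with $\delta<\mu(t_{0})$ shows left‑density forces $f^{\nabla}(t_{0})\ge 0$. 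Combining the relevant pair in each of the four cases (including the fully dense case, where both reduce to $0$) yields $f^{\Delta}(t_{0})\le 0\le f^{\nabla}(t_{0})$ in all situations.

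Finally, with the sign inequality in hand I would exhibit $\alpha$ explicitly. If $f^{\Delta}(t_{0})=f^{\nabla}(t_{0})=0$ any $\alpha$ works; otherwise $f^{\nabla}(t_{0})-f^{\Delta}(t_{0})>0$ and the choice
\[
\alpha=\frac{f^{\nabla}(t_{0})}{\,f^{\nabla}(t_{0})-f^{\Delta}(t_{0})\,}
\]
lies in $[0,1]$, since the numerator is $\ge 0$ and is bounded above by the denominator because $-f^{\Delta}(t_{0})\ge 0$, and a direct substitution gives $f^{\diamond_{\alpha}}(t_{0})=0$. I expect the main obstacle to be the careful handling of the mixed dense/scattered subcases in the middle step, precisely the justification that at a right‑dense (resp.\ left‑dense) point the delta (resp.\ nabla) derivative reduces to an honest one‑sided derivative once the neighborhood is shrunk below $\eta(t_{0})$ (resp.\ $\mu(t_{0})$) so that no scattered neighbor interferes; everything after the sign inequality is routine.
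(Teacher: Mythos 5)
Your proposal is correct and follows essentially the same route as the paper: establish that a local maximum forces $f^{\Delta}(t_{0})\le 0\le f^{\nabla}(t_{0})$, then solve the convex-combination equation with the same explicit choice $\alpha=f^{\nabla}(t_{0})/\bigl(f^{\nabla}(t_{0})-f^{\Delta}(t_{0})\bigr)$. The only difference is that you supply the four-case justification of the sign inequality (which the paper simply asserts) and merge the degenerate cases into one formula, whereas the paper dispatches them separately by taking $\alpha=1$ or $\alpha=0$; neither difference changes the substance of the argument.
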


\begin{proof}
Suppose that f assumes its local maximum at
$t_{0}\in\mathbb{T}_{k}^{k}$. Then, we have $f^{\triangle}(t_{0})\leq
0$ and $f^{\nabla}(t_{0})\geq 0$. If $f^{\triangle}(t_{0})=0$ ($f^{\nabla}(t_{0})=0$), then we put $\alpha=1$ ($\alpha=0$).
Therefore, we can assume that $f^{\triangle}(t_{0})< 0$ and
$f^{\nabla}(t_{0}) >0$. Setting
\begin{equation*}
\alpha=\frac{f^{\nabla}(t_{0})}{f^{\nabla}(t_{0})-f^{\triangle}(t_{0})},
\end{equation*}
it is easy to see that $0<\alpha <1$, and we obtain the intended result.
\end{proof}

\begin{Example}
Let $\mathbb{T} = \{-1, 0,1,2,3,4\}$, and $f$ be defined by
$f(-1)=f(0) = 5$, $f(1) = 0$, $f(2) = 1$, and $f(3) = f(4) = 3$. The
point 1 is a local minimizer, points 0 and 3 are local maximizers,
and point 2 is neither a minimizer neither a maximizer (as well as
-1 and 4, by definition). The delta-derivative is only zero at point
3 while the nabla-derivative is zero only at zero. According with
Theorem~\ref{thm:ft}, at all extremizers there exists an alpha such
that the diamond-alpha derivative vanishes. Indeed,
$f^{\diamond_{0}}(0)=0$, $f^{\diamond_{5/6}}(1)=0$, and
$f^{\diamond_{1}}(3)=0$. Observe that $f^{\diamond_{\alpha}}(2) \ne
0$ for all $\alpha\in [0,1]$.
\end{Example}

\begin{Th}[diamond-alpha Rolle's mean value theorem]
\label{thm:mvt1}
Let $f$ be a continuous function on $[a,b]$ that is delta and nabla
differentiable on $(a,b)$ with $f(a)=f(b)$. Then, there exists
$\alpha\in [0,1]$ and $c\in (a,b)$ such that
$f^{\diamond_{\alpha}}(c)=0$.
\end{Th}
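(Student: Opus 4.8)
The plan is to deduce Rolle's theorem from the diamond-alpha Fermat's theorem (Theorem~\ref{thm:ft}) by exhibiting an interior extremizer of $f$. First I would invoke the extreme value theorem for continuous functions on a compact time-scale interval: since $f$ is continuous on $[a,b]$, it attains its supremum $M$ and its infimum $m$ at points of $[a,b]$. I would then split into the cases $M=m$ and $M>m$.

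In the case $M=m$ the function $f$ is constant, so $f^{\Delta}(t)=f^{\nabla}(t)=0$ for every $t\in(a,b)$ and hence $f^{\diamond_{\alpha}}(c)=0$ for any interior point $c$ and any $\alpha\in[0,1]$. In the case $M>m$ I would show that at least one of the extreme values must be attained in the open interval. Setting $v:=f(a)=f(b)$ we have $m\le v\le M$, and since $m<M$ the value $v$ cannot coincide with both $m$ and $M$; thus either $v<M$, forcing the maximum to occur at some $c\in(a,b)$, or $v>m$, forcing the minimum to occur at some $c\in(a,b)$. (Note that $M>m$ already guarantees that $\mathbb{T}$ has more than the two endpoints, so such interior points of the time scale exist.)

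Once an interior global extremizer $c\in(a,b)$ is in hand, I would check the hypotheses of Theorem~\ref{thm:ft} and conclude. Membership $c\in\mathbb{T}_{k}^{k}$ is immediate, because $a<c<b$ means $c$ is neither the minimum $a$ nor the maximum $b$ of $\mathbb{T}$, so neither the removal of a left-scattered maximum nor of a right-scattered minimum can delete $c$. Delta and nabla differentiability at $c$ hold by hypothesis since $c\in(a,b)$. The step requiring the most care---and the main obstacle---is verifying that a point $c$ of global maximum (respectively minimum) over $[a,b]$ is a local maximum (respectively minimum) in the technical sense of the preceding definition. This amounts to reading off each clause: because $f(t)\le f(c)$ for all $t\in[a,b]$ and both $\sigma(c)$ and $\rho(c)$ lie in $[a,b]$, the scattered-point inequalities $f(\sigma(c))\le f(c)$ and $f(\rho(c))\le f(c)$ hold, while at dense or one-sided-dense points the required neighborhood inequality holds because $[a,b]$ itself contains a neighborhood of $c$ on which $f\le f(c)$. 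With the local-extremum property established, Theorem~\ref{thm:ft} produces $\alpha\in[0,1]$ with $f^{\diamond_{\alpha}}(c)=0$, completing the argument.
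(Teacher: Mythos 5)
Your proposal is correct and follows essentially the same route as the paper: use continuity on the compact interval $[a,b]$ to obtain a global extremizer, observe that $f(a)=f(b)$ forces it into the interior $(a,b)$, verify it is a local extremum in the sense of the paper's definition, and invoke the diamond-alpha Fermat theorem (Theorem~\ref{thm:ft}). If anything, your write-up is slightly more complete than the paper's, which only treats the case $f(t)\geq f(a)$ (maximum attained) and leaves the symmetric minimum case implicit, whereas you handle both and explicitly check membership in $\mathbb{T}_{k}^{k}$ and the clauses of the local-extremum definition.
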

\begin{proof}
If $f=const$, then $f^{\diamond_{\alpha}}(c)=0$ for all $\alpha\in
[0,1]$ and $c\in (a,b)$. Hence, assume that $f$ is not the constant
function and $f(t)\geq f(a)$ for all $t\in [a,b]$. Since function
$f$ is continuous on the compact set $[a,b]$, $f$ assumes its
maximum $M>f(a)$. Therefore, there exists $c\in [a,b]$ such that
$M=f(c)$. As $f(a)=f(b)$, $a<c<b$, clearly $f$ assumes its local
maximum at $c$ and there exists $\alpha\in [0,1]$ such that
$f^{\diamond_{\alpha}}(c)=0$.
\end{proof}

The following mean value theorem is a generalization of Theorem~\ref{thm:mvt1}.

\begin{Th}[diamond-alpha Lagrange's mean value theorem]
\label{thm:mvt2} Let $f$ be a continuous function on $[a,b]$ that is
delta and nabla differentiable on $(a,b)$. Then, there exists
$\alpha\in [0,1]$ and $c\in (a,b)$ such that
\begin{equation*}
f^{\diamond_{\alpha}}(c)=\frac{f(b)-f(a)}{b-a} \, .
\end{equation*}
\end{Th}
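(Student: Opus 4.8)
The plan is to mimic the classical reduction of Lagrange's mean value theorem to Rolle's theorem, using the diamond-alpha Rolle's theorem (Theorem~\ref{thm:mvt1}) just established. First I would introduce the auxiliary function
\begin{equation*}
\phi(t)=f(t)-f(a)-\frac{f(b)-f(a)}{b-a}\,(t-a),
\end{equation*}
which is designed so that the unwanted affine part is subtracted off. A direct substitution shows $\phi(a)=0$ and $\phi(b)=f(b)-f(a)-(f(b)-f(a))=0$, so $\phi(a)=\phi(b)$. Since $f$ is continuous on $[a,b]$ and delta and nabla differentiable on $(a,b)$, and since the subtracted term is affine (hence continuous and both delta and nabla differentiable everywhere), $\phi$ inherits all the hypotheses of Theorem~\ref{thm:mvt1}.

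Applying the diamond-alpha Rolle's theorem to $\phi$ then yields some $\alpha\in[0,1]$ and some $c\in(a,b)$ with $\phi^{\diamond_{\alpha}}(c)=0$. The next step is to compute $\phi^{\diamond_{\alpha}}$ explicitly. Using the linearity of the diamond-alpha derivative (properties (i) and (ii) of Section~\ref{sec:prl}), the derivative of the constant $f(a)$ vanishes, and the key elementary fact is that the map $t\mapsto t-a$ has both delta and nabla derivative equal to $1$, so by \eqref{eq:defSmp} its diamond-alpha derivative equals $\alpha\cdot 1+(1-\alpha)\cdot 1=1$ for every $\alpha$. Consequently $\phi^{\diamond_{\alpha}}(t)=f^{\diamond_{\alpha}}(t)-\frac{f(b)-f(a)}{b-a}$, and evaluating at $t=c$ with $\phi^{\diamond_{\alpha}}(c)=0$ delivers exactly the asserted identity.

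The delicate point to watch is whether the value of $\alpha$ produced by Rolle's theorem (which in the proof of Theorem~\ref{thm:ft} is pinned down by the sign data of the delta and nabla derivatives at the extremum) is compatible with the $\alpha$ appearing in the target equation. Here the argument is saved precisely because the diamond-alpha derivative of an affine function equals $1$ independently of $\alpha$: the same $\alpha$ that Rolle's theorem returns for $\phi$ can be carried through verbatim to $f$, with no constraint imposed by the affine correction term. Hence there is no genuine obstacle, and the result follows with the same $\alpha$ and $c$ supplied by the application of Theorem~\ref{thm:mvt1}.
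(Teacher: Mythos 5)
Your proof is correct and follows essentially the same route as the paper: the auxiliary function $\phi$ you construct is just the negative of the paper's function $g(t)=f(a)-f(t)+(t-a)\frac{f(b)-f(a)}{b-a}$, and both arguments apply the diamond-alpha Rolle's theorem (Theorem~\ref{thm:mvt1}) and use the fact that the affine term contributes $\frac{f(b)-f(a)}{b-a}$ to the diamond-alpha derivative independently of $\alpha$. Your closing remark about carrying the same $\alpha$ through to $f$ is exactly the point made implicitly in the paper's final computation, so there is nothing to correct.
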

\begin{proof}
Consider the function $g$
defined on $[a,b]$ by
\begin{equation*}
    g(t)=f(a)-f(t)+(t-a)\frac{f(b)-f(a)}{b-a}.
\end{equation*}
Clearly $g$ is continuous on $[a,b]$ and $\triangle$ and $\nabla$
differentiable on $(a,b)$. Also $g(a)=g(b)=0$. Hence, there exists
$\alpha\in [0,1]$ and $c\in (a,b)$ such that
$g^{\diamond_{\alpha}}(c)=0$. Since
\begin{multline*}
g^{\diamond_{\alpha}}(t)= \alpha
g^{\triangle}(t)+(1-\alpha)g^{\nabla}(t)\\
= \alpha\left(-f^{\triangle}
(t)+\frac{f(b)-f(a)}{b-a}\right)
+(1-\alpha)\left(-f^{\nabla}
(t)+\frac{f(b)-f(a)}{b-a}\right),
\end{multline*}
we conclude that
\begin{equation*}
0=-\alpha
f^{\triangle}(c)-(1-\alpha)f^{\nabla}(c)+\frac{f(b)-f(a)}{b-a}
\Leftrightarrow
f^{\diamond_{\alpha}}(c)=\frac{f(b)-f(a)}{b-a} \, .
\end{equation*}
\end{proof}

We end by proving a diamond-alpha Cauchy mean value theorem,
which is the more general form of the diamond-alpha
mean value theorem.

\begin{Th}[diamond-alpha Cauchy's mean value theorem]
\label{CTh}
Let $f$ and $g$ be continuous functions on $[a,b]$ that
are delta and nabla differentiable on $(a,b)$. Suppose that
$g^{\diamond_{\alpha}}(t)\neq 0$ for all $t\in (a,b)$ and all
$\alpha\in [0,1]$. Then, there exists $\bar{\alpha}\in [0,1]$ and
$c\in (a,b)$ such that
\begin{equation*}
\frac{f(b)-f(a)}{g(b)-g(a)}=\frac{f^{\diamond_{\bar{\alpha}}}(c)}{g^{\diamond_{\bar{\alpha}}}(c)}.
\end{equation*}
\end{Th}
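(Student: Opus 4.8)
The plan is to mimic the classical proof of Cauchy's mean value theorem by constructing an auxiliary function to which the diamond-alpha Lagrange theorem (Theorem~\ref{thm:mvt2}) or Rolle theorem (Theorem~\ref{thm:mvt1}) can be applied. The standard trick is to define
\begin{equation*}
h(t) = [f(b)-f(a)]\,g(t) - [g(b)-g(a)]\,f(t) \, ,
\end{equation*}
which is continuous on $[a,b]$ and both delta and nabla differentiable on $(a,b)$ because $f$ and $g$ are. A direct computation gives $h(a) = f(b)g(a)-g(b)f(a)$ and $h(b) = f(b)g(a)-g(b)f(a)$ as well, so that $h(a)=h(b)$. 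Thus $h$ satisfies the hypotheses of Rolle's theorem on time scales.

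By Theorem~\ref{thm:mvt1} applied to $h$, there exist $\bar\alpha \in [0,1]$ and $c \in (a,b)$ with $h^{\diamond_{\bar\alpha}}(c) = 0$. Using the linearity of the diamond-alpha derivative (properties (i) and (ii) from Section~\ref{sec:prl}), this reads
\begin{equation*}
[f(b)-f(a)]\,g^{\diamond_{\bar\alpha}}(c) - [g(b)-g(a)]\,f^{\diamond_{\bar\alpha}}(c) = 0 \, .
\end{equation*}
The next step is to rearrange this into the claimed quotient form. Since the hypothesis guarantees $g^{\diamond_{\alpha}}(t)\neq 0$ for all $t\in(a,b)$ and all $\alpha\in[0,1]$, in particular $g^{\diamond_{\bar\alpha}}(c)\neq 0$, so we may divide by it. It remains to observe that $g(b)-g(a)\neq 0$: if it were zero, then $g$ would satisfy the hypotheses of Rolle's theorem, forcing $g^{\diamond_{\alpha}}(\xi)=0$ for some $\alpha$ and $\xi\in(a,b)$, contradicting the standing assumption. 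Dividing through by $g^{\diamond_{\bar\alpha}}(c)$ and by $g(b)-g(a)$ then yields the desired identity.

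The main subtlety, which distinguishes this from the classical real-variable argument, is that here the value of $\alpha$ is not fixed in advance but is itself produced by Rolle's theorem together with the point $c$. One must therefore be careful that the same $\bar\alpha$ appears in both $f^{\diamond_{\bar\alpha}}(c)$ and $g^{\diamond_{\bar\alpha}}(c)$; this is automatic because applying Theorem~\ref{thm:mvt1} to the single function $h$ produces one pair $(\bar\alpha,c)$ at which the single diamond-alpha derivative of $h$ vanishes, and the linearity expansion of $h^{\diamond_{\bar\alpha}}(c)$ uses this common $\bar\alpha$ throughout. Consequently no mismatch of parameters arises, and the argument closes cleanly.
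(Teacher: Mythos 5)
Your proof is correct and takes essentially the same approach as the paper: both apply the diamond-alpha Rolle theorem (Theorem~\ref{thm:mvt1}) to an auxiliary linear combination of $f$ and $g$ chosen to have equal values at $a$ and $b$, both rule out $g(b)=g(a)$ by applying Rolle to $g$ against the hypothesis $g^{\diamond_{\alpha}}(t)\neq 0$, and both finish by dividing by $g^{\diamond_{\bar{\alpha}}}(c)$. The only cosmetic difference is that you use the cross-multiplied function $h(t)=[f(b)-f(a)]g(t)-[g(b)-g(a)]f(t)$, whereas the paper uses $F(t)=f(t)-f(a)-\frac{f(b)-f(a)}{g(b)-g(a)}[g(t)-g(a)]$, so the paper must verify $g(b)\neq g(a)$ before defining its auxiliary function while you may defer that check to the final rearrangement.
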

\begin{proof}
Let us first observe that from the condition
$g^{\diamond_{\alpha}}(t)\neq 0$ for all $t\in (a,b)$ and all
$\alpha\in [0,1]$ it follows from Theorem~\ref{thm:mvt1}
that $g(b)\neq g(a)$. Hence, we may consider the function $F$ defined on $[a,b]$ by
\begin{equation*}
    F(t)=f(t)-f(a)-\frac{f(b)-f(a)}{g(b)-g(a)}[g(t)-g(a)].
\end{equation*}
Clearly, $F$ is continuous on $[a,b]$ and delta and nabla
differentiable on $(a,b)$. Also, $F(a)=F(b)$. Applying
Theorem~\ref{thm:mvt1} to the function $F$ and taking into account that
\begin{equation*}
\begin{split}
F^{\diamond_{\alpha}}(t) &= \alpha
F^{\triangle}(t)+(1-\alpha)F^{\nabla}(t)\\
&=\alpha
\left(f^{\triangle}(t)-\frac{f(b)-f(a)}{g(b)-g(a)}g^{\triangle}(t)\right)
+(1-\alpha)\left(f^{\nabla}(t)-\frac{f(b)-f(a)}{g(b)-g(a)}g^{\nabla}(t)\right) \, ,
\end{split}
\end{equation*}
we conclude that there exists $\bar{\alpha}\in [0,1]$
and $c\in (a,b)$ such that
\begin{equation*}
0=f^{\diamond_{\bar{\alpha}}}(c)-\frac{f(b)-f(a)}{g(b)-g(a)}g^{\diamond_{\bar{\alpha}}}(c).
\end{equation*}
Hence,
\begin{equation*}
f^{\diamond_{\bar{\alpha}}}(c)=\frac{f(b)-f(a)}{g(b)-g(a)}g^{\diamond_{\bar{\alpha}}}(c) \, ,
\end{equation*}
and dividing by $g^{\diamond_{\bar{\alpha}}}(c)$ we complete the
proof.
\end{proof}


\section*{Acknowledgments}

Research partially supported by the
{\it Centre for Research on Optimization and Control} (CEOC) from
the {\it Portuguese Foundation for Science and Technology} (FCT),
cofinanced by the European Community Fund FEDER/POCI 2010. The first author was also supported by KBN
under Bia{\l}ystok Technical University Grant S/WI/1/08.



\end{document}